\newtheorem{thm}{Theorem}[section]
\newtheorem{lem}[thm]{Lemma}
\newtheorem{cor}[thm]{Corollary}
\theoremstyle{definition}
\newtheorem{defn}[thm]{Definition}
\theoremstyle{remark}
\newtheorem{remk}[thm]{Remark}
\newtheorem{remks}[thm]{Remarks}
\newtheorem{exm}[thm]{Example}
\newtheorem{exms}[thm]{Examples}
\newtheorem{notat}[thm]{Notation}
\numberwithin{equation}{section}
\newcommand{\Z}{{\mathbb Z}}
\newcommand{\fm}{{\mathfrak m}}
\newcommand{\fp}{{\mathfrak p}}
\newcommand{\surj}{\twoheadrightarrow}
\newcommand{\by}[1]{\stackrel{#1}{\rightarrow}}
\newcommand{\remove}[1]{}
\newcommand{\iso}{\by \sim}
\newcommand{\rank}{{\rm rank}}
\newcommand{\Hom}{{\rm Hom}}
\newcommand{\Spec}{{\rm Spec \,}}
\newcommand{\hh}{\rm ht}
\newcommand{\Um}{\mbox{\rm Um\,}}
\newcommand{\Aut}{{\operatorname{\rm Aut}}}
\newcommand{\El}{{\operatorname{\rm E}}}
\newcommand{\ds}{{/\kern-3pt/}}
\renewcommand{\dim}{\text{\rm dim}}
\newcommand{\tuborg}{\left\{\begin{array}{ll}}
\newcommand{\sluttuborg}{\end{array}\right.}
\def\ol#1{\overline{#1}}
\newcounter{elno}
\newcounter{elno-abc}   
\newcounter{elno-abc-prime}
\begin{document}

\title[Efficient generation, unimodular element in a geometric ring]{Efficient generation, unimodular element in a geometric subring of a polynomial ring}
\author{Sourjya Banerjee, Chandan Bhaumik and Husney Parvez Sarwar}

\address{(Sourjya Banerjee) Department of Mathematics and Statistics, Indian Institute of Science Education and Research Kolkata, Campus Road, Mohanpur, West Bengal 741246}
\email{sourjya.pdf@iiserkol.ac.in}
\email{sourjya91@gmail.com}

\address{(Chandan Bhaumik) Department  of Mathematics, Indian  Institute of Technology Kharagpur,  Kharagpur 721302, West Bengal, India}
\email{cbhaumik11math@gmail.com}

\address{(H.P. Sarwar) Department  of Mathematics, Indian  Institute of Technology Kharagpur,  Kharagpur 721302, West Bengal, India}
\email{parvez@maths.iitkgp.ac.in}
\email{mathparvez@gmail.com}


\keywords{Efficient generation, projective modules, unimodular elements, set-theoretic generation, Euler class group}

\subjclass[2020]{Primary 13C10; Secondary 19A15}

\maketitle

\begin{quote}\emph{Abstract.}  
Let $R$ be a commutative Noetherian ring of dimension $d$. First, we define the ``geometric subring" $A$ of a polynomial ring $R[T]$ of dimension $d+1$ (the definition of geometric subring is more general, see (\ref{defn:imp})).
Then we prove that every locally complete intersection ideal of height $d+1$ is a complete intersection ideal. Thus improving the general bound of Mohan Kumar \cite{NMK78} for an arbitrary ring of dimension $d+1$. Afterward, we deduce that
every finitely generated projective $A$-module of rank $d+1$ splits off a free summand of rank one. This improves the general bound of Serre \cite{Serre58} for an arbitrary ring. Finally, applications are given to a set-theoretic generation of an ideal in the geometric ring $A$ and its polynomial extension $A[X]$.

\end{quote}
\setcounter{tocdepth}{1}

\section{Introduction}\label{sec:Intro}
Let $R$ be a commutative Noetherian ring of (Krull) dimension $d$. A classical
result of Serre \cite{Serre58} asserts that every finitely generated projective $R$-module $P$ of rank $d$ splits off a free summand of rank one. In general, it is the best possible result as it is evidenced by the tangent bundle over the real algebraic sphere. Therefore if the $rank(P)\leq d$, then the question of $P$ to splits off a free summand, is subtle and critical. In the case $rank(P)=\dim(A)$, where  $A$ is a smooth affine algebra over an algebraically closed field, there is a
well-developed obstruction theory which is due to Murthy \cite[Theorem 3.7]{Mu94}. One of the
crucial steps of Murthy \cite{Mu94} is the following beautiful theorem of Mohan Kumar \cite{NMK84}.

\begin{thm}
Let $A$ be an affine algebra over an algebraically closed field of dimension $d\ge 2$. Let $P$ be a finitely generated projective $A$-module of rank $d$ and $\phi\in \Hom_A(P,A)$ such that the image ideal $\phi(P)$ has height $d$. Then, $P$ splits off a free summand of rank one if and only if $\phi(P)$ is generated by $d$ elements.
	\end{thm}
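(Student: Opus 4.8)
The plan is to reduce both conditions to the $0$-dimensional quotient $A/I$ forced by the height hypothesis and then prove the two implications separately; the real content of each is a gain of one generator over the Forster--Swan/Eisenbud--Evans estimate, made possible by the base field being algebraically closed and by $d\ge 2$. Put $I=\phi(P)$. Since $I$ has height $d=\dim A$, the ring $A/I$ is Artinian, so $V(I)=\{\fm_{1},\dots,\fm_{r}\}$ is a finite set of maximal ideals, $P/IP$ is \emph{free} of rank $d$ over $A/I$, and $I$ is locally generated by at most $d$ elements, being the unit ideal away from $V(I)$. Each of the two conditions then amounts, in effect, to globalising across the small locus $V(I)$ the ``local orientation'' that $\phi$ puts on $I/I^{2}$.

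For ``$I$ generated by $d$ elements $\Rightarrow P$ has a free summand of rank one'': a choice of $d$ generators of $I$ gives a surjection $\theta\colon A^{d}\twoheadrightarrow I$, and one wants to upgrade $\phi\colon P\twoheadrightarrow I$ to a surjection $P\twoheadrightarrow A$. Over $\Spec A\smallsetminus V(I)$, $\phi$ is already a surjection onto the trivial line bundle (there $I$ is the unit ideal); near $V(I)$, where $P$ is free of rank $d$, such a surjection is available too; and the comparison of $\phi$ with $\theta$ modulo $I$ along $V(I)$ supplies precisely the datum needed to glue the two into a global surjection $P\twoheadrightarrow A$. This is a Quillen-style local--global patching whose gluing obstruction vanishes because $V(I)$ is $0$-dimensional and $d\ge 2$; Schanuel's lemma applied to $\phi$ and $\theta$, which yields $P\oplus\Ker\theta\cong A^{d}\oplus\Ker\phi$, is a convenient way to organise the comparison.

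For the converse ``$P\cong A\oplus Q$ $\Rightarrow I$ generated by $d$ elements'': write $\phi=(s,\psi)$ with $s\in A$ and $\psi\colon Q\to A$, so $I=sA+\psi(Q)$ with $Q$ projective of rank $d-1$; then $I$ is locally generated by $d$ elements, but the crude estimate only bounds the number of global generators by $d+1$, and the point is to save one. Precomposing $\phi$ with elementary automorphisms of $A\oplus Q$ leaves $\phi(P)=I$ unchanged while replacing $s$ by $s+\psi(q)$ and $\psi$ by $\psi+s\beta$ for arbitrary $q\in Q$ and $\beta\in Q^{*}$; since the family $\{\,s+\psi(q):q\in Q\,\}$ generates $I$, its base locus is exactly the $0$-dimensional set $V(I)$. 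A Swan-type Bertini/general-position argument over the algebraically closed field then lets one first move $s$ to a non-zerodivisor $s'$ with $A/(s')$ of dimension $d-1$ and in good position relative to $V(I)$, and then, working in $A/(s')$ where dimension $d-1\ge 1$ is available for a further genericity argument, cut the ideal $\psi(Q)\bmod (s')$ down to $d-1$ generators $j_{1},\dots,j_{d-1}$; lifting these back gives $I=(s',j_{1},\dots,j_{d-1})$.

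The main obstacle in both directions is exactly this gain of one generator over Forster--Swan, and it is there that the hypotheses are genuinely used. In the converse it is the Bertini/transversality step --- which must keep the relevant homomorphism surjective while cutting the relevant locus in the expected dimension, precisely what can fail over a non-closed ground field --- and I expect this to be the single hardest point. In the forward implication it is the vanishing of the patching obstruction across $V(I)$, which holds because $V(I)$ is $0$-dimensional. The surrounding scaffolding --- passage to $A/I$, Schanuel's lemma, and the bookkeeping with elementary automorphisms --- is routine by comparison.
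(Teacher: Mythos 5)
The paper does not prove this statement at all: it is quoted from Mohan Kumar \cite{NMK84} as motivation, so there is no in-paper argument to compare with, and your proposal has to stand on its own. It does not, because at the two places where you yourself locate the difficulty you assert rather than prove the key step. In the forward direction you claim the patching obstruction for gluing $\phi$ (away from $V(I)$) with a local surjection near $V(I)$ ``vanishes because $V(I)$ is $0$-dimensional and $d\ge 2$.'' That cannot be the reason: over $R=\R[x,y,z]/(x^2+y^2+z^2-1)$ one has exactly the same configuration ($V(I)$ finite, $d=2$, $P/IP$ free), and the analogous statement fails --- this is precisely why the Euler class group with its orientation data was invented, and why Mohan Kumar's theorem needs the ground field algebraically closed. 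The genuine content is a subtraction/addition principle: one must match the surjection $P/IP\cong(A/I)^d\surj I/I^2$ induced by $\phi$ with the one induced by the $d$ generators, and the discrepancy is a unit (determinant) in the Artinian ring $A/I$; it is only because the residue fields are algebraically closed that this unit can be absorbed (roots of units exist modulo nilpotents), so that ``generated by $d$ elements'' implies ``generated by $d$ elements inducing the given map on $I/I^2$.'' Your sketch never uses algebraic closedness in this direction, so it would prove a false statement.

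The converse has the same kind of gap. After writing $\phi=(s,\psi)$ and putting things in general position, you ask to generate $\psi(Q)$ modulo $(s')$ by $d-1$ elements, where $Q$ has rank $d-1$ over a ring of dimension $d-1$. That is a problem of exactly the same depth as the theorem being proved, one dimension lower: Forster--Swan/Eisenbud--Evans and ``genericity'' only give $d$ generators there, and no mechanism for the gain of one is supplied. The actual proofs (Mohan Kumar \cite{NMK84}, and Murthy \cite{Mu94} in refined form) do not proceed by such an induction; both directions are routed through a zero-cycle/$K$-theoretic obstruction (the class of $A/I$ in $F^dK_0(A)$, later $\CH_0(A)$), together with the theorem that for a height-$d$ local complete intersection ideal over an algebraically closed field the vanishing of this class is equivalent to generation by $d$ elements --- and it is in that equivalence, via Swan's Bertini theorem and the unit/orientation analysis over algebraically closed residue fields, that the real work and the hypothesis $d\ge 2$ are used. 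As written, your argument identifies where the difficulties sit but does not overcome either of them.
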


Now let us define the efficient generation of an ideal $I$ in a ring $R$
which has an apparent connection with the complete intersection ideal (see Remark \ref{rem:comp}).
The ideal $I$ is called efficiently generated if $\mu(I/I^2)=\mu(I)$, where $\mu(*)$ is the minimum number of generators of $*$ as a $R/I$ (respectively $R$) module. In \cite{NMK77}, Mohan Kumar proved that $\mu(I/I^2)\le \mu(I)\le \mu(I/I^2)+1$. Moreover, in \cite{NMK78}, it was shown by him that $I$ is efficiently generated whenever $\mu(I/I^2)\ge d+1$. It is well known that there exists a real maximal ideal of a real $n$-sphere which is not generated by $n$-many elements. This shows that the previously mentioned bound for efficient generation of an ideal is the best possible in general. Therefore, the question of efficient generation of an ideal $I$ becomes interesting whenever $\mu(I/I^2)=\dim(R)$. In the polynomial ring $R[T]$, due to the results of Sathaye \cite{Sathaye78} (for affine domains over infinite fields) and Mohan Kumar \cite{NMK78}, we know that any ideal $K\subset R[T]$ with the property $\mu(K/K^2)=d+1$ is efficiently generated provided $\hh(K)\ge 1$. This was one of the crucial steps in their solutions of Eisenbud-Evans conjecture on the number of generators of a finite module.

Before we state our results on efficient generation, we offer the following definition.

\begin{defn}\label{defn:imp}
 Let $R$ be a commutative Noetherian ring of dimension $d\geq1$. Let $n\in \Z$ and $f\in R[T]$ be a non-zero divisor.
 A ring $A$ is called a geometric
 subring of $R[T,f^n]$ if
\begin{enumerate}
    \item $A$ is a Noetherian ring of dimension $d+1$ such that $R\subset A \subset R[T,f^n]$;
    \item  there exists a non-zero divisor $s\in R$ such that $A_s=R_s[T,f^n]$. 
\end{enumerate}
 \end{defn}

\begin{remk}
    We call $A$ a geometric subring because our major examples are Rees algebras $R[It]$ and Rees-like algebras $R[It, t^n]$ ($n\in \Z$). These algebras appear naturally in algebraic geometry while blowing up a variety along a subvariety. Another class of examples are  the Noethrian  symbolic Rees algebras.
\end{remk}

For the geometric subring $A$ of $R[T,f^n]$, we have the following result
about the efficient generation of an ideal in $A$ (for a proof, see Theorem \ref{imt}). {\it  Let $A$ be a ring as in (\ref{defn:imp}) (need not be Noetherian). Let $I\subset A$ be an ideal such that $\hh(I)\ge 2$ and $\mu(I)<\infty$. Moreover, assume that $I=<f_0,\ldots,f_d>+I^2$. Then there exist $F_i\in I $ such that $I=<F_0,\ldots,F_d>$ with $F_i-f_i\in I^2$ $(i=0,\ldots, d).$ }

More generally, we prove the following result.

 \begin{thm}(Theorem \ref{thm:main}) \label{thm:main:intro}
 Let $A$ be a ring as in (\ref{defn:imp}). Let $L$ be a finitely generated projective $A$-module of rank one. Let $I\subset A$ be an ideal such that $\hh(I)=\dim(A)$. Further, assume that there is a surjection $\omega_I:(L/IL)\oplus (A/I)^d\surj I/I^2$. Then there exists a surjection $\phi:L\oplus A\surj I$ such that $\phi\otimes A/I=\omega_I$. In particular, the $(d+1)$-th Euler class group $E^{d+1}(A,L)$ is trivial.
 \end{thm}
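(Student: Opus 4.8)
The plan is to reduce the statement to the polynomial ring case $R[T, f^n]$ after inverting a suitable non-zerodivisor, and then patch back. Since $\hh(I) = \dim(A) = d+1$ and $A$ has a non-zerodivisor $s$ with $A_s = R_s[T, f^n]$ (a localization of a polynomial ring over $R_s$, hence of dimension $\le d+1$), the height hypothesis forces $I \not\subset \fp$ for any prime $\fp$ containing $s$ of height $< d+1$; more importantly, over $A/I$ the element $s$ may or may not be a unit. I would split into the two standard regimes. \emph{First}, if $I + sA = A$, then $I$ contains an element of the form $1 - sa$, and one localizes at $s$: $I_s \subset A_s = R_s[T, f^n]$ has height $\ge d+1 \ge 1$, and the surjection $\omega_I$ localizes to a surjection onto $I_s/I_s^2$. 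Here I invoke the Sathaye–Mohan Kumar type result (the boxed statement just before Theorem \ref{thm:main:intro}, or rather its module-theoretic enhancement with the line bundle $L$) to lift $\omega_{I_s}$ to a genuine surjection $L_s \oplus A_s \surj I_s$; then a standard ``moving'' argument (subtraction trick, using that $1 - sa \in I$) transports this back to a surjection $L \oplus A \surj I$ agreeing with $\omega_I$ modulo $I$.

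\emph{Second}, if $I + sA \ne A$, choose a non-zerodivisor in $I$ congruent to the given data and use the fact that $A/sA$ is a ring of dimension $\le d$ — here I would argue via \cite{Serre58} / Mohan Kumar \cite{NMK77}, \cite{NMK78} applied in lower dimension, or better, reduce to a single well-chosen localization where $s$ becomes invertible by a prime-avoidance argument producing $s' \in R$ (a non-zerodivisor) with $A_{s'} = R_{s'}[T, f^n]$ and $I + s'A = A$. Concretely: the primes of $A$ containing $I$ all have height $d+1$ (since $\hh(I) = d+1$ and $\dim A = d+1$), so they are maximal; avoiding the finitely many of these that contain $s$ by the usual addition/subtraction modification of generators, one can replace the original generating data by an equivalent one (same image in $I/I^2$) so that the modified ideal becomes comaximal with $sA$, returning to the first case. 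The subtlety is to do this modification while respecting the line bundle $L$ rather than a free rank-one module; this is where one uses that $L$ is generated by two elements on the relevant open sets and invokes the transitivity of the action of elementary (or more precisely $E(L \oplus A)$) automorphisms on unimodular rows adapted to $L$.

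The main obstacle I anticipate is precisely this \emph{patching with a nontrivial line bundle}: the classical results of Sathaye and Mohan Kumar are stated for ideals and their modular generation by a fixed number of elements (free modules), so upgrading to $L \oplus A \surj I$ requires either (a) a local trivialization of $L$ combined with a gluing argument controlling the Euler class, or (b) a direct adaptation of Mohan Kumar's ``addition and subtraction principles'' to the bundle $L \oplus A$. I would follow route (b): establish an analogue of the Move/Subtraction lemmas for pairs $(L \oplus A, \text{ideal})$ over $A$ using that these hold over $A_s = R_s[T, f^n]$ (polynomial ring case, where even $L_s$ is free by Quillen–Suslin if $R_s$ is nice, or by Lindel/Bass–Quillen type results in the relevant generality) and over $A/sA$ (lower dimension, so Bass' stable range / Serre's splitting applies directly). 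Once both local pieces carry the required surjection compatibly on the overlap $A_s/sA_s$-type locus, a Mayer–Vietoris / fiber-product patching (as in the standard proofs that $E^n(A,L)$ vanishes, e.g. Bhatwadekar–Sridharan) yields $\phi: L \oplus A \surj I$ with $\phi \otimes A/I = \omega_I$. The final sentence — triviality of $E^{d+1}(A,L)$ — is then immediate, since by definition the Euler class group is generated by pairs $(I, \omega_I)$ with $\hh(I) = d+1$ modulo the relation that kills those admitting a surjective lift, and we have just shown every generator is such a relation.
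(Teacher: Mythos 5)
Your overall skeleton (invert the special non-zerodivisor $s$, use polynomial-ring-type results over $A_s=R_s[T,f^n]$, patch back, and deduce triviality of $E^{d+1}(A,L)$ at the end) is the same as the paper's, but two of your key steps have genuine gaps. First, your Case 2 ($I+sA\neq A$) does not work as described: you propose to ``replace the original generating data by an equivalent one (same image in $I/I^2$) so that the modified ideal becomes comaximal with $sA$'', but the ideal $I$ is part of the data and cannot be changed by re-choosing generators; no prime-avoidance modification of $\omega_I$ alters the fact that $I$ and $sA$ share a maximal ideal, and there is in general no alternative non-zerodivisor $s'$ trivializing $A$ and $L$ that is comaximal with $I$. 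What is actually needed here is Euler-class arithmetic on the \emph{pair} $(I,\omega_I)$: the paper uses the moving lemma together with an adaptation of Bhatwadekar--Sridharan's Lemma 5.6 (Lemma \ref{brcl}) to replace $(I,\omega_I)$ by an equal class $(J,\omega_J)$ with $s\in\sqrt{J}$, and that lemma in turn requires knowing $(I,\omega_I)_s=0$ in $E^{d+1}(A_s)$, which is supplied by Das--Zinna's efficient generation theorem for overrings of polynomial rings \cite{DZ15}. Your proposal never identifies this input; the ``Sathaye--Mohan Kumar type result'' you point to is either stated for honest polynomial rings (and $A_s=R_s[T,f^n]$ is not one) or is the boxed statement about $A$ itself, which is Theorem \ref{imt} and is \emph{deduced from} the theorem you are proving, so invoking it would be circular.

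Second, the patching step is underpowered as you state it. The correct gluing diagram is between $A_s$ and $A_{1+sA}$ with overlap $A_{s(1+sA)}$ (your ``overlap $A_s/sA_s$-type locus'' is not a patching locus; $s$ is a unit in $A_s$), and the overlap ring has dimension up to $d$, so comparing the two surjections amounts to moving one unimodular row of length $d+1$ to another over a ring of dimension $d$: Bass' stable range, which you invoke ``in lower dimension'', needs rows of length $\geq d+2$ and does not apply. The paper instead uses Rao's stability theorem for overrings $S'^{-1}R'[T]$ with $\dim R'\leq d-1$ \cite[Theorem 5.1]{Rao82}, followed by Quillen splitting of the elementary matrix, to build the fiber-product surjection. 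Relatedly, your freeness of $L_s$ via Quillen--Suslin/Lindel is unavailable here ($R_s$ is an arbitrary Noetherian ring and $A_s$ is not a polynomial ring over a field or regular base); the paper gets around this by first trivializing $L$ over the total quotient ring of $R$ (a product of principal ideal domains after reduction) and absorbing the resulting denominator into $s$. With these three ingredients (Das--Zinna vanishing over $A_s$ feeding the adapted Lemma 5.6, the choice of $s$ trivializing both $A$ and $L$, and Rao plus Quillen splitting at the overlap) your outline becomes the paper's proof; without them the two decisive reductions in your sketch do not go through.
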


 

The efficient generation problem was solved for polynomial ring $R[T]$ 
by Mohan Kumar \cite{NMK78}, for Laurent polynomial ring
$R[T,T^{-1}]$ by Mandal
\cite{Ma82}, and for overrings for polynomial ring by Das--Zinna \cite{DZ15}.

As an amusing application, we first derive the following result from Theorem \ref{thm:main:intro}. {\it Every finitely generated projective $A$-module of rank $d+1$ splits off a free summand of rank one (Theorem \ref{eue}).}
In the case of Rees algebras, this result was proved by Rao--Sarwar
\cite{RS19}. In the case of symbolic Rees algebras (need not be Noetherian), it is proved in \cite{BhSa22}.
As we noted in the first paragraph that there is an obstruction to splits off a free summand in the case $\rank=\dim$.  For instance, see \cite{Mu94} and more generally see \cite{BR98},\cite{BR00} for a well-developed obstruction theory
to determine when a finitely generated projective module splits off a free summand of rank one.
However, when the ring $B$ is ``nice", then every finitely generated projective $B$-module $P$ splits off a free summand of rank one even if $\rank(P) \leq \dim(B)$. Examples of such rings are polynomial rings $B=R[X_1,\ldots,X_n]$ (cf. \cite{Pl83}, \cite{BhR84}), Laurent polynomial rings $B=R[X_1,\ldots,X_n,Y_1^{\pm 1},\ldots, Y_m^{\pm 1}]$ (cf. \cite{Ma82}, \cite{BLR85}), monoid rings $R[M]$, where $M$ is a monoid, belongs to a class of commutative cancellative torsion-free monoids (cf. \cite{KeSa17}, \cite{KeMa22}).

An ideal $I$ in a commutative ring $R$ is called set-theoretically generated by $f_1,\ldots,f_n \in R$ if the radical of the ideal $I$, i.e. $\sqrt{I}=\sqrt{<f_1,\ldots,f_n>}=:$ the radical of the ideal generated by $f_1,\ldots, f_n$. In section \ref{4}, we prove that every finitely generated ideal in $A$
(need not be Noetherian) of height $\geq 2$
is set-theoretically generated by $d+1$ elements (Theorem \ref{si}). In section \ref{5}, we extend the results of section \ref{4} in polynomial extension $A[X]$ (Theorem \ref{pe}).

\section*{Acknowledgment}
 H.P. Sarwar acknowledges the grant SRG/2020/000272,
S.E.R.B. Govt. of India. This project was conceived when he was a
 visiting scientist at I.S.I. Kolkata. He would like to thank the Institute for the support.
S. Banerjee would like to thank Professor Mrinal Kanti Das for generously sharing his ideas which help him to solve some of the problems tackled in this article. He also thanks Dr. Md. Ali Zinna for clearing some doubts about set-theocratic complete intersection ideals.

\section{Recollection of basic definitions and results}\label{sec:Prelim}

\subsection{Notation.} We assume that all the rings are commutative with the multiplicative identity $1$ and modules are finitely generated. Also, we assume that the projective modules have a constant rank function. 
Let $R$ be a ring, and let $I,\,J$ be two ideals of $R$. For any element $x\in R$, $<I,x>$ (respectively $<I,J>$) denotes
the ideal generated by $I$ and $x$ 
 (respectively by $I$ and $J$ ).
Let $\sqrt{I}$ denote the radical of the ideal $I$, that is, 
$\sqrt{I}=\{x\in R \,|\, x^n\in I\}$. Let $\hh(I)$ denote the height of the ideal $I$. Let $P$ be a projective $R$-module. Then throughout the paper, $P^*$ denote the $R$-module $\Hom_R(P,R)$.
Let $\Aut(P)$ denote the group of all automorphisms of $P$. 

\smallskip

We begin with the following lemma, which has an origin from a lemma due to Mohan Kumar \cite[Lemma]{NMK77}, slightly recast to suit our needs. For the sake of completeness, we give the proof.

\begin{lem}\label{MKL}
		Let $R$ be a commutative Noetherian ring and $I$ be an ideal of $R$. Let $J, K$ be
		ideals of $A$ contained in $I$ such that $K\subset I^2$ and $I=J+K$. Then there exists $e\in K$ such that $e(1-e)\in J$ and $I=<J,e>.$
\end{lem}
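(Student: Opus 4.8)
The plan is to quotient out by $J$ and locate an idempotent inside the resulting ideal. Set $\bar R := R/J$ and let $\bar I$, $\bar K$ denote the images of $I$, $K$ in $\bar R$. Because $I = J + K$ we have $\bar I = \bar K$, and because $K \subseteq I^2$ we get $\bar I = \bar K \subseteq \bar I^{2} \subseteq \bar I$, so $\bar I = \bar I^{2}$. Since $R$ is Noetherian, $\bar I$ is finitely generated, say $\bar I = (\bar a_1, \dots, \bar a_n)$. It then suffices to produce $\bar e \in \bar I$ with $\bar e^2 = \bar e$ and $\bar I = (\bar e)$: lifting $\bar e$ (which lies in $\bar K$) to an element $e \in K$ will give $e(1-e) \in J$ and $I = \langle J, e \rangle$, as I explain below.

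The core of the argument is the standard fact that a finitely generated idempotent ideal of a commutative ring is generated by an idempotent. Using $\bar I = \bar I^{2}$, write $\bar a_i = \sum_{j} \bar b_{ij}\, \bar a_j$ with all $\bar b_{ij} \in \bar I$, so that $(\mathrm{Id}_n - B)\,(\bar a_1, \dots, \bar a_n)^{t} = 0$ where $B = (\bar b_{ij})$. Multiplying by the adjugate of $\mathrm{Id}_n - B$ yields $\det(\mathrm{Id}_n - B)\,\bar a_i = 0$ for every $i$. Expanding the determinant, the identity permutation contributes $\prod_i(1 - \bar b_{ii})$, whose expansion is $1$ plus terms in $\bar I$, while every other permutation contributes a term divisible by some off-diagonal entry $-\bar b_{ij} \in \bar I$; hence $\det(\mathrm{Id}_n - B) = 1 - \bar e$ for some $\bar e \in \bar I$. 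Therefore $(1 - \bar e)\,\bar I = 0$; in particular $(1-\bar e)\bar e = 0$, i.e. $\bar e^2 = \bar e$, and for any $y \in \bar I$ we get $y = \bar e y \in (\bar e)$, so $\bar I = (\bar e)$ (the reverse inclusion holds since $\bar e \in \bar I$).

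It remains to lift and check the two conclusions. As $\bar e \in \bar I = \bar K$, pick $e \in K$ with image $\bar e$ in $\bar R$. Then $e(1-e)$ has image $\bar e(1 - \bar e) = 0$, so $e(1-e) \in J$. For the generation statement, $\langle J, e \rangle \subseteq I$ is clear since $J \subseteq I$ and $e \in K \subseteq I$; conversely, given $x \in I$, its class $\bar x$ lies in $\bar I = (\bar e) = \bar e\,\bar R$, so $\bar x = \bar e\,\bar r$ for some $r \in R$, whence $x - er \in J$ and $x \in \langle J, e\rangle$. Thus $I = \langle J, e\rangle$.

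I do not expect a real obstacle here; the only points requiring minor care are the identification $\bar I = \bar K$ (which is what allows the idempotent to be pulled back into $K$ rather than merely into $I$) and the bookkeeping showing $\det(\mathrm{Id}_n - B) = 1 - \bar e$ with $\bar e \in \bar I$. One could alternatively invoke Nakayama's lemma or quote the idempotent-ideal fact directly, but since the statement is meant to be self-contained the determinant computation above is the cleanest route.
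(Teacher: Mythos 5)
Your proof is correct and follows essentially the same route as the paper: pass to $R/J$, note that $I/J$ equals the image of $K$ and is idempotent, and extract $e$ via the determinant-trick form of Nakayama's lemma. The only difference is that you write out the adjugate computation explicitly where the paper simply cites Nakayama, so the two arguments coincide in substance.
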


\begin{proof}
    Note that $(I/J)^2=(I^2+J)/J=I/J$ (as $K\subset I^2$ and $I=J+K$). Hence $I/J$ is an idempotent ideal in the Noetherian ring $R/J$. Let `bar' denote going modulo the ideal $J$. Since the image of $K$ maps sujectively onto $I/J$, we get $\ol K\ol I=\ol I$. By Nakayama lemma there exists $e\in K$ such that $(\ol 1-\ol e)\ol I=\ol 0$. Therefore, $(1-e)I=J$, that is, $I+eI=J$. Thus going modulo $e$ we get $I=J$, hence $J+<e>=I$. Since $e\in K\subset I$ and $(1-e)I=J$, we get $e(1-e)\in J$.
\end{proof}

The following result, which is an application of Lemma \ref{MKL}, is required to prove our main theorem.

\begin{lem} \label{rsl}
Let $R$ be a commutative Noetherian ring. Let $I,J\subset R$ be two ideals. Suppose that there exists $e\in I^2$ such that $I=<J,e>$. Moreover, assume that $e\in \sqrt{J}$. Then $J=I$.
\end{lem}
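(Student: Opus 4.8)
The plan is to deduce the equality from Lemma~\ref{MKL} together with the elementary observation that an element of a commutative ring which is simultaneously idempotent and nilpotent must be zero.

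First I would apply Lemma~\ref{MKL} with the principal ideal $K:=eR$ playing the role of ``$K$'' there. The required hypotheses hold: since $e\in I^2$ we have $eR\subseteq I^2\subseteq I$, and the assumption $I=\langle J,e\rangle$ is precisely $I=J+K$ (note that $J\subseteq I$ holds automatically because $I=\langle J,e\rangle$). Lemma~\ref{MKL} then supplies an element $e'\in eR$ with $e'(1-e')\in J$ and $I=\langle J,e'\rangle$.

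Next I would transfer the radical hypothesis from $e$ to $e'$: writing $e'=er$ with $r\in R$ and choosing $n$ with $e^n\in J$ (which is exactly what $e\in\sqrt J$ means), we get $(e')^n=e^n r^n\in J$, so $e'\in\sqrt J$ as well. Now pass to $R/J$ and denote by $\bar x$ the image of $x\in R$. The relation $e'(1-e')\in J$ says $\bar{e'}^2=\bar{e'}$, hence $\bar{e'}^k=\bar{e'}$ for every $k\ge 1$; but $(e')^n\in J$ says $\bar{e'}^n=0$, so taking $k=n$ forces $\bar{e'}=0$, i.e.\ $e'\in J$. Therefore $I=\langle J,e'\rangle=J$, which is the assertion.

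The argument is short and there is no genuine obstacle; the only point deserving a moment's care is verifying that the principal ideal $eR$ satisfies the three conditions imposed on ``$K$'' in Lemma~\ref{MKL}, and this is immediate from $e\in I^2$. No hypothesis beyond the Noetherianity of $R$ — already consumed by Lemma~\ref{MKL} — is needed.
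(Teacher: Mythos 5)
Your proof is correct, and it takes a genuinely different route from the paper's. The paper argues via the local-global principle: it first uses $e\in\sqrt{J}$ to show $V(I)=V(J)$, and then at each prime $\fp\in V(I)$ applies Lemma~\ref{MKL} in $R_\fp$ (with $K=I^2_\fp$) to produce $s\in I^2_\fp$ with $s(1-s)\in J_\fp$ and $I_\fp=\langle J_\fp,s\rangle$, concluding $s\in J_\fp$ because $1-s$ is a unit in the local ring, whence $I_\fp=J_\fp$ everywhere. You instead work globally: applying Lemma~\ref{MKL} once with $K=eR$ (legitimate, since $e\in I^2$ gives $eR\subseteq I^2\subseteq I$ and $I=J+eR$), you obtain $e'\in eR$ with $e'(1-e')\in J$ and $I=\langle J,e'\rangle$; the radical hypothesis transfers from $e$ to $e'=er$ since $(e')^n=e^nr^n\in J$, and then in $R/J$ the image of $e'$ is simultaneously idempotent and nilpotent, hence zero, so $e'\in J$ and $I=\langle J,e'\rangle=J$. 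Your version is shorter, avoids localization entirely, and makes transparent exactly what $e\in\sqrt{J}$ is for (it forces the idempotent to be nilpotent); the paper's version replaces that observation by the unit trick $1-s\in R_\fp^{\times}$ at primes containing $I$, which is the same mechanism it reuses elsewhere (e.g.\ in Lemma~\ref{kl} and in the proof of Theorem~\ref{thm:main}), so its proof fits a uniform template rather than gaining anything extra here. Both arguments consume Noetherianity only through Lemma~\ref{MKL}.
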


\begin{proof}
     Because of the local-global principle, it is enough to prove that $J_{\fp}=I_{\fp}$ for all $\fp\in \Spec(R)$. Since $J\subset I$, we have $V(I)=\{\fp\in \Spec(R):I\subset \fp\} \subset V(J)$.  Let $\fp \in V(J)$.  Since $e\in \sqrt{J}$, we have $e\in \fp$. Therefore, $\fp\in V(I)$ as $I=<J,e>$. Hence we have $V(I)=V(J)$.

Let $\fp\in \Spec(R)$. If $\fp\not\in V(I)=V(J)$, then $J_{\fp}=I_{\fp}=R_{\fp}.$ Now assume that $\fp\in V(I)$. In the local ring $R_{\fp}$, we have  $I_{\fp}=<J,e>A_{\fp}=J_{\fp}+I^2_{\fp}$ as $e\in I^2$. Using Lemma \ref{MKL}, there exists $s\in I^2_{\fp}$ such that $I_{\fp}= <J,s>R_{\fp}$ and $s(1-s)\in J_{\fp}$. Now since $s\in I^2_{\fp}\subset \fp R_{\fp}$ and $R_{\fp}$ is a local ring, the element $1-s$ is a unit in $R_{\fp}$. Hence we have $s\in J_{\fp}$. Therefore, we get $I_{\fp}=J_{\fp}$. This completes the proof.
\end{proof}

\smallskip
		
		 The following result is a consequence of a theorem by Eisenbud-Evans \cite{EE73}. One can find a proof of the same in \cite[Corollary 2.8]{BR98}.
		 
		\begin{thm}\label{ee}
		Let $R$ be a commutative ring and $P$ be a projective $R$-module of rank $n$. Let $(\alpha,a)\in (P^*\oplus R)$. Then there exists an element $\beta \in P^{*}$ such that $\hh (I_a)\ge  n$,
		where $I=(\alpha+a\beta)(P).$ 
		\end{thm}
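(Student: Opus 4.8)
The plan is to recast the statement in terms of \emph{basic elements} and then prove it by a general-position argument in the style of Eisenbud--Evans. First I would record the translation. Put $\phi_\beta:=\alpha+a\beta\in P^{*}$ and $I_\beta:=\phi_\beta(P)$. For a prime $\mathfrak p$ with $a\notin\mathfrak p$ one has $I_\beta R_a\subseteq\mathfrak p R_a$ if and only if $\phi_\beta(P)\subseteq\mathfrak p$, i.e.\ if and only if the reduction $\phi_\beta\otimes\kappa(\mathfrak p)\colon P\otimes\kappa(\mathfrak p)\to\kappa(\mathfrak p)$ vanishes; and because $\rank P=n$, vanishing of this map is the same as non-surjectivity of $(\phi_\beta)_{\mathfrak p}\colon P_{\mathfrak p}\to R_{\mathfrak p}$. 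Hence $\hh(I_\beta R_a)\ge n$ is equivalent to the assertion that $\phi_\beta$ is basic (that is, $\phi_\beta(P)\not\subseteq\mathfrak p$) at every $\mathfrak p$ with $a\notin\mathfrak p$ and $\hh\mathfrak p\le n-1$. Two further observations guide the construction: at such a $\mathfrak p$ the image $\overline a$ is a unit in $\kappa(\mathfrak p)$, so $\phi_\beta\otimes\kappa(\mathfrak p)=0$ pins $\beta\otimes\kappa(\mathfrak p)$ down to the single value $-\,\overline a^{-1}\overline\alpha$ in the $n$-dimensional space $(P\otimes\kappa(\mathfrak p))^{*}$; and at primes containing $a$ the reduction $\phi_\beta\otimes\kappa(\mathfrak p)$ is independent of $\beta$, so those primes impose nothing. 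Thus the task is to choose $\beta\in P^{*}$ whose reduction misses one prescribed point in each residue fibre over $\{\mathfrak p:a\notin\mathfrak p,\ \hh\mathfrak p\le n-1\}$.

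I would carry this out inductively on the height. Start with $\beta_0=0$, and assume $\beta_i$ has been constructed so that $\phi_i:=\phi_{\beta_i}$ is basic at every $\mathfrak p$ with $a\notin\mathfrak p$ and $\hh\mathfrak p\le i-1$ (vacuous for $i=0$). If $i<n$, every prime $\mathfrak p$ with $a\notin\mathfrak p$ and $\hh\mathfrak p\le i$ at which $\phi_i$ fails to be basic must have height exactly $i$ and be minimal over $\phi_i(P)$ --- a strictly smaller prime over $\phi_i(P)$ would still avoid $a$, have height $\le i-1$, and thus contradict the inductive hypothesis --- so, over a Noetherian ring, there are only finitely many of them, say $\mathfrak p_1,\dots,\mathfrak p_r$. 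The general-position lemma of Eisenbud--Evans then furnishes a form $\delta\in P^{*}$, to be added to $\beta_i$, which is nonzero modulo each $\mathfrak p_j$ --- so that $\phi_{i+1}:=\phi_i+a\delta$ acquires basicness at every $\mathfrak p_j$, its reduction there being $\overline a\,\overline\delta\neq0$ --- and which at the same time is constrained (taken from an appropriate submodule of $P^{*}$ cut out by an ideal) so that replacing $\phi_i$ by $\phi_i+a\delta$ destroys basicness at none of the lower-height primes already handled. Then $\beta_{i+1}:=\beta_i+\delta$ continues the induction, and $\beta:=\beta_n$ does the job.

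The step I expect to be the main obstacle is precisely this construction of $\delta$ at each stage. Gaining basicness at the finitely many fresh primes $\mathfrak p_1,\dots,\mathfrak p_r$ forces $\delta$ to escape each of them, hence to escape the ideal $\phi_i(P)$; yet retaining basicness at the infinitely many lower-height primes already under control pulls $\delta$ in the opposite direction, and reconciling these two demands --- which over a field would be a routine Bertini argument but over a general base is not --- is the whole technical content of the Eisenbud--Evans theory of basic elements. For the detailed argument at this point, and for the extension beyond the Noetherian case in which the statement is phrased, I would simply appeal to \cite{EE73}, in the form packaged as \cite[Corollary 2.8]{BR98}.
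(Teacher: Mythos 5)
The paper itself does not prove Theorem~\ref{ee}: it explicitly states that the result is a consequence of Eisenbud--Evans \cite{EE73} and points to \cite[Corollary 2.8]{BR98} for a proof. Your basic-element reformulation and the height-by-height induction are a sound gloss on that machinery, and since you ultimately defer the one genuinely hard step (constructing $\delta$ so as to gain basicness at the finitely many new primes without losing it at the lower ones, and the non-Noetherian case) to the same two references, your approach coincides with the paper's.
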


The ``free version'' of the next lemma is due to Mohan Kumar \cite[Corollary 3]{NMK78}. 

\begin{lem}
    \label{kl}
	Let $R$ be a commutative Noetherian ring of dimension $d$. Let $L$ be a projective $R$-module of rank one. Let $I\subset R$ be an ideal. Suppose that we are given a surjection $\omega:(L/IL)\oplus (R/I)^d\surj I/I^2$. Then there exists a surjection $\Omega:L\oplus R^d\surj I$ such that $\Omega\otimes R/I=\omega$.  
\end{lem}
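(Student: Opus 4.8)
The plan is to lift $\omega$ to some homomorphism, put it in good position by one application of Mohan Kumar's Lemma~\ref{MKL} together with the Eisenbud--Evans moving lemma (Theorem~\ref{ee}), and then conclude with Lemma~\ref{rsl}.

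First I would lift $\omega$ along the surjection $I\surj I/I^2$: precomposing $\omega$ with the canonical surjection $L\oplus R^d\surj (L/IL)\oplus (R/I)^d$ and using projectivity of $L\oplus R^d$, we obtain $\theta\colon L\oplus R^d\to I$ with $\theta\otimes R/I=\omega$. Set $J:=\theta(L\oplus R^d)\subseteq I$; the surjectivity of $\omega$ is equivalent to $J+I^2=I$. Applying Lemma~\ref{MKL} to the ideals $J$ and $I^2$ (both contained in $I$, with $I^2\subseteq I^2$ and $J+I^2=I$) produces $e\in I^2$ with $e(1-e)\in J$ and $I=\langle J,e\rangle$.

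Next I would apply Theorem~\ref{ee} to the projective module $P:=L\oplus R^d$, which has rank $d+1$, and to the pair $(\theta,e)\in P^*\oplus R$. This yields $\beta\in P^*$ such that, writing $\Omega:=\theta+e\beta$ and $J':=\Omega(P)$, the ideal $J'$ has height $\geq d+1$ after localizing at $e$. Since $\dim R_e\leq\dim R=d<d+1$ and a proper ideal of a Noetherian ring has height at most the Krull dimension of the ring, this forces $J'R_e=R_e$, which is exactly the assertion $e\in\sqrt{J'}$. The remaining verifications are formal and use only $e\in I^2$: from $e\in I^2$ we get $J'=\Omega(P)\subseteq I$ and $\Omega\otimes R/I=\theta\otimes R/I=\omega$; and from $\Omega(x)\equiv\theta(x)\pmod{eR}$ we get $J'+eR=J+eR=I$, i.e. $I=\langle J',e\rangle$. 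Thus $J'\subseteq I$, $e\in I^2$, $I=\langle J',e\rangle$ and $e\in\sqrt{J'}$, so Lemma~\ref{rsl} gives $J'=I$. Hence $\Omega\colon L\oplus R^d\surj I$ is a surjection with $\Omega\otimes R/I=\omega$, as required.

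There is no serious obstacle: the lemma is essentially an assembly of the three preceding results. The step that needs the most care is the interface with Theorem~\ref{ee} --- one must read its conclusion ``$\hh(I_e)\geq d+1$'' (the height being measured for the image ideal in $R_e$) as the radical membership $e\in\sqrt{\Omega(P)}$ that Lemma~\ref{rsl} consumes, and one must check directly that passing from $\theta$ to $\theta+e\beta$ changes neither the reduction modulo $I$ nor the ideal $\langle\theta(P),e\rangle$. It is precisely because $\rank P=d+1$ strictly exceeds $\dim R_e$ that the Eisenbud--Evans height bound upgrades to ``$J'$ is the unit ideal in $R_e$'', which is what makes the argument close.
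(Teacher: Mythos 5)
Your proof is correct and follows essentially the same route as the paper's: lift $\omega$ to $\theta\colon L\oplus R^d\to I$, apply Lemma~\ref{MKL} to get $e\in I^2$ with $I=\langle J,e\rangle$, use Theorem~\ref{ee} to move $\theta$ to $\Omega=\theta+e\beta$ so that $\hh(J'_e)\geq d+1>\dim R_e$ forces $J'_e=R_e$, and then conclude $J'=I$ from Lemma~\ref{rsl}. Your write-up is in fact a bit more explicit than the paper's on the verifications that $\Omega$ still lifts $\omega$ and that $\langle J,e\rangle=\langle J',e\rangle$, but the argument is the same.
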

\begin{proof}
     Let $\alpha:L\oplus R^d\to I$ be a lift (might not be surjective) of $\omega$. Let $J=\alpha(L\oplus R^d)$. Then note that $I=J+I^2$. Therefore, by Lemma \ref{MKL} there exists $e\in I^2$ such that $I=<J,e>$ and $e(1-e)\in J$. By using Theorem \ref{ee}, there exists $\beta\in (L\oplus R^d)^*$ such that $\hh(J')_e\ge d+1>\dim(R)$, where $J'=\alpha'(L\oplus R^d)$ and $\alpha'=\alpha+e\beta$. Observe that, we have $I=<J,e>=<J',e>$. Since $\dim(R)=d$, we get $J'_e=R_e$. This implies that $e\in \sqrt{J'}$. Therefore, using Lemma \ref{rsl} we get $I=J'$. This finishes the proof of the lemma.
\end{proof}

	\subsection{A brief introduction to the Euler class group}
		
			The purpose of this part is to briefly recall some of the basic definitions and terminologies of the Euler class theory from \cite{BR00}. 
			Before going to the Euler class theory, we recall the following definitions.
			
			\begin{defn}
			Let $R$ be a ring and $P$ a projective $R$-module.
			  An element $p\in P$ is called a unimodular element of $P$ if there exists a $R$-linear map $\phi:P\to R$ such that $\phi(p)=1$. Observe that, $P$ has a unimodular element if and only if $P$ splits off a free summand of rank one.
			\end{defn}
			
			\begin{defn}
			
				Let $R$ be a ring. Let $P$ be a projective $R$-module such that either $P$ or $P^*$ has a unimodular element. We choose $\phi\in P^*(=\Hom_R(P,R))$ and $p \in P$ such that $\phi(p)=0$. We define an endomorphism $\phi_p$ as the composite $\phi_p:P\to R\to P$, where $R\to P$ is the map sending $1\to p.$ Then by a transvection we mean an automorphism of $P$, of the form $1+\phi_p$, where either $\phi\in \Um(P^*)$ or $p\in \Um(P)$. Let $\El(P)$   denote  the subgroup of $\Aut(P)$ generated by all transvections.

			\end{defn}

			 Let $R$ be a commutative Noetherian ring of dimension $d\ge 2$. Let $I\subset R$ be an ideal such that $\mu(I/I^2)=\hh(I)=d$. Let $L$ be a rank one projective $R$-module. Let $\alpha,\beta:(L/IL)\oplus(R/I)^{d-1}\surj I/I^2$ be two surjections. We say $\alpha$ and $\beta$ are related if there
			 exists $\sigma\in \El((L/IL)\oplus (R/I)^{d-1})$ such that $\alpha \sigma = \beta$. This defines an equivalence relation on the set of all surjections $(L/IL)\oplus(R/I)^{d-1}\surj I/I^2$. Let $[\alpha]$ denote the
			 equivalence class of $\alpha$. We will call $[\alpha]$ a ``local orientation" of $I$ with respect to $L$. With abuse of notation sometimes we will call $\alpha$ a local orientation of $I$ with respect to $L$ instead of $[\alpha]$. The local orientation $[\alpha]$  is said to be a ``global orientation" of $I$ with respect to $L$, if there exists a surjective map $\Gamma:L\oplus R^{n-1}\surj I$ such that $\Gamma\otimes R/I=\alpha$. In this situation, we call $\Gamma$ a surjective lift of $\alpha$. Notice that, since the canonical map $\El(L\oplus R^{d-1})\surj \El((L/IL)\oplus (R/I)^{d-1})$ is surjective, if $\alpha$ has a surjective lift, then so has any $\beta$ equivalent to $\alpha$. Whenever $L\cong R$, we will say a local orientation (respectively global orientation) of $I$ instead of a local orientation (respectively global orientation) of $I$ with respect to $R$.
			

		 Let $G$ be the free Abelian group on the set $B$ of pairs $(I, 
		 \omega_J )$, where:
		\begin{enumerate}
			\item $I \subset R$ is an ideal such that $\mu(I/I^2)=\hh(I)=d$;
			\item  $\Spec(R/I)$ is connected;
			\item $\omega_I : (L/IL)\oplus (R/I)^{d-1}\surj I/I^2$
			is an equivalence class of local orientations of $I$ with respect to $L$.
		\end{enumerate}

	 Let $H$ be the subgroup of $G$ generated by the set $S$ of pairs $(J, \omega_J )$, where $\omega_J$ is an equivalence class of global orientations of $J$ with respect to $L$. Then the quotient group $G/H$ is the ``$d$-th Euler class group" of $R$ with respect to $L$, denoted as $E^d(R,L)$. Whenever $L\cong R$, we will write $E^d(R)$ instead of $E^d(R,R)$.

		 Let $P$ be a projective $R$-module of rank $d$ with determinant $L$. Let $\chi:\wedge^d(L\oplus R^{d-1})\iso \wedge^d (P)$ be an isomorphism. Let $\lambda : P \surj I$ be a surjection, where $I$ is an ideal of $R$ of height
			$d$. Let `bar' denote going modulo $I$. We obtain an induced surjection $\lambda\otimes (R/I) : P/IP \surj  I/I^2$. Note that, since $P$ has determinant $L$ and
			$\dim (R/I) =0$, we have $P/IP\cong (L/IL)\oplus (R/I)^{d-1}$. We choose an isomorphism $\phi :(L/IL)\oplus (R/I)^{d-1}\iso  P/IP$, such that $\wedge^d\phi=\chi\otimes R/I$. Let $\omega_I$ be the surjection $ (\lambda\otimes R/I)\circ\phi: (L/IL)\oplus(R/I)^{d-1}\surj I/I^2$. We say that $(I, \omega_I)$ is an ``Euler cycle'' induced by the triplet
			$(P,\lambda,\chi)$. 
			
			Whenever the Euler cycle is independent of the choice of $\lambda$, that is, if $(J,\omega_J)$ is another Euler cycle induced by a triplet $(P,\lambda',\chi)$, for some $\lambda'\in P^*$, then  $(I,\omega_I)=(J,\omega_J)$ in $E^d(R,L)$, in this situation we shall call the class of $(I,\omega_I)$ in the Euler class group $E^d(R,L)$ as the ``Euler class" of the pair $(P,\chi)$, denoted as $e(P,\chi)$. It was proved in \cite{BR00}, that whenever the ring contains an infinite field and either $R$ is smooth or $\frac{1}{d!}\in R$, then the Euler class of such a pair $(P,\chi)$ is well defined.

The following theorem is due to Bhatwadekar-Sridharan \cite[Theorem 4.2]{BR00}. This has been used crucially and frequently in the course of proving our main theorems.
			
			\begin{thm}
   \label{tcecg}
			Let $R$ be a commutative Noetherian ring of dimension $d\ge 2$. Let $L$ be a projective $R$-module of rank one. Let $(I,\omega_I)\in E^d(R,L)$. Then $(I,\omega_I)=0$ in $E^d(R,L)$ if and only if $\omega_I$ is a global orientation of $I$ with respect to $L$.  
			\end{thm}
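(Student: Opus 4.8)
The plan is to reduce the statement to three standard technical principles of Euler class theory (the ``addition'' and ``subtraction'' principles, in the terminology of that literature), each proved by the same kind of Eisenbud--Evans-plus-Mohan-Kumar argument as in Lemmas \ref{MKL}, \ref{rsl} and \ref{kl}. First, a \emph{moving lemma}: given a pair $(J,\omega_J)$ with $\hh(J)=d$ and finitely many auxiliary ideals, there is an ideal $J'$ of height $d$, comaximal with all of them, and a local orientation $\omega_{J'}$, with $(J',\omega_{J'})=(J,\omega_J)$ in $E^d(R,L)$; moreover $\omega_{J'}$ may be chosen global when $\omega_J$ is. Second, the \emph{addition principle}: if $J_1+J_2=R$ with $\hh(J_i)=d$ and $\omega_{J_1},\omega_{J_2}$ global, then the induced ``product'' orientation on $J_1\cap J_2$ is global, a surjective lift being assembled, via the Chinese Remainder Theorem, from surjective lifts of the $\omega_{J_i}$. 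Third, the \emph{subtraction principle}: if $J=J_1\cap J_2$ with $J_1+J_2=R$, $\hh(J_i)=d$, and both the joint orientation $\omega_J$ and $\omega_{J_2}$ are global, then $\omega_{J_1}$ is global; the heart of its proof is to perturb a near-lift of $\omega_{J_1}$ by an element supplied by Theorem \ref{ee} so that the error ideal acquires height exceeding $d$, after which Lemmas \ref{MKL} and \ref{rsl} force it to coincide with $J_1$.

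Granting these, the ``if'' direction is immediate from the definitions: if $\omega_I$ is global, then (after splitting $\Spec(R/I)$ into connected components) $(I,\omega_I)$ is one of the generators of the subgroup $H$, hence $(I,\omega_I)=0$ in $E^d(R,L)$.

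For the ``only if'' direction, suppose $(I,\omega_I)=0$ in $E^d(R,L)$, i.e. $(I,\omega_I)\in H$. Writing this as a $\Z$-linear combination of generators of $H$ and moving the negative terms to the other side gives an identity
\[
(I,\omega_I)+\sum_{j=1}^{m}(N_j,\omega_{N_j})=\sum_{i=1}^{n}(M_i,\omega_{M_i})
\]
in $G$, in which every $\omega_{N_j}$ and $\omega_{M_i}$ is a global orientation of a height-$d$ ideal. Using the moving lemma I replace the $M_i$ and $N_j$ by ideals that are pairwise comaximal, comaximal with $I$, and still carry global orientations, keeping the identity valid in $E^d(R,L)$; the addition principle then collapses the right side to a single pair $(M,\omega_M)$, $M=\bigcap_i M_i$, with $\omega_M$ global, and the $N$-summands on the left to $(N,\omega_N)$, $N=\bigcap_j N_j$, with $\omega_N$ global. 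Since $I+N=R$, one has $(I,\omega_I)+(N,\omega_N)=(I\cap N,\omega_I\star\omega_N)$, so, after arranging the moving step to preserve the ideal-level relation $M=I\cap N$ (not merely the classes), $\omega_I\star\omega_N=\omega_M$ is global. The subtraction principle, applied to $M=I\cap N$ with the global orientations $\omega_M$ and $\omega_N$, now gives that $\omega_I$ is global, i.e. admits a surjective lift $L\oplus R^{d-1}\surj I$.

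The hard part is the package of three principles, and within it the subtraction principle together with the ``ideal-level'' refinement of the moving lemma that feeds it: this is the only place in the argument going beyond formal manipulation in the free abelian group $G$ and the elementary handling of connected components, and it is precisely where one must build a genuine surjection onto $I$ out of surjections onto $I\cap N$ and onto $N$.
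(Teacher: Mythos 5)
You should first be aware that the paper does not prove Theorem \ref{tcecg} at all: it is quoted verbatim from Bhatwadekar--Sridharan \cite[Theorem 4.2]{BR00} and used as a black box, so there is no internal proof to compare with. Your outline does reproduce, in spirit, the strategy of the original proof in \cite{BR00} (moving lemma, addition principle, subtraction principle, then bookkeeping in the free abelian group $G$), and the ``if'' direction is essentially correct once one decomposes into connected components.

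However, as a proof the proposal has genuine gaps. First, the three ``principles'' you invoke are the entire mathematical content of the theorem, and your claim that they are ``proved by the same kind of Eisenbud--Evans-plus-Mohan-Kumar argument as in Lemmas \ref{MKL}, \ref{rsl} and \ref{kl}'' is not accurate: in \cite{BR00} the subtraction principle is \cite[Corollary 3.4]{BR00}, deduced from a substantive theorem about surjections from projective modules whose proof requires patching arguments and earlier machinery (going back to \cite{BR98} and results of Mandal), far beyond the general-position Lemmas \ref{MKL}--\ref{kl} and Theorem \ref{ee}; indeed the present paper itself imports it as an external result. Second, the decisive step of the ``only if'' direction is circular as written: after you apply the moving lemma you say the identity remains ``valid in $E^d(R,L)$'', but in $E^d(R,L)$ every globally oriented pair is zero, so that identity says nothing beyond the hypothesis $(I,\omega_I)=0$. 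The argument has to be run at the level of the free abelian group $G$, where one must also handle the fact that the generators of $H$ are pairs $(J,\omega_J)$ with $\Spec(R/J)$ possibly disconnected, interpreted in $G$ through their connected-component decomposition; consequently the equation in $G$ is not a matching of basis elements, and the componentwise orientations of your $M_i$ and $N_j$ need not be global. Your parenthetical ``after arranging the moving step to preserve the ideal-level relation $M=I\cap N$ (not merely the classes)'' is precisely where the actual induction of \cite[Theorem 4.2]{BR00} does its work, and it is not supplied here. So the proposal is a correct road map but not a proof; to use it in this paper you should simply cite \cite[Theorem 4.2]{BR00}, as the authors do.
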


    

\section{Efficient generation and triviality of the Euler class group}

The following lemma is a modification of a result due to Bhatwadekar-Sridharan \cite[Lemma 5.6]{BR00}, and it is crucial to our main theorem. For the proof, we shall follow their arguments with some necessary alterations.

		\begin{lem}\label{brcl}
		Let $R$ be a commutative Noetherian ring of dimension $d\ge 2$. Let $L$ be a projective $R$-module of rank one. Let $(I,\omega_I)\in E^d(R,L)$ such that $(I,\omega_I)\neq 0$ in $E^d(R,L)$. Further, assume that there exists a non-zero divisor $s\in R$ such that the following hold:
		
		\begin{enumerate}
			\item $\dim(R_s)=d$;
			\item $\hh(I_s)=d$;
			\item $(I,\omega_I)_s=0$ in $E^d(R_s, L_s)$, where $(I,\omega_I)_s :=(I_s,\omega_I\otimes (R/I)_s)$.
		\end{enumerate} 
		Then there exists $(J,\omega_J)\in E^d(R,L)$ such that $(I,\omega_I)=(J,\omega_J)$ in $E^d(R,L)$ and $s\in \sqrt{J}$.
		\end{lem}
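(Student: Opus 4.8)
\textbf{The plan} is to follow the method of Bhatwadekar--Sridharan (this lemma being a variant of \cite[Lemma~5.6]{BR00}): I will use the triviality over $R_s$ to globalize the orientation there, pull it back along a lift over $R$, and then split off a comaximal component that the hypothesis $(I,\omega_I)_s=0$ forces onto $V(s)$. Write $P:=L\oplus R^{d-1}$. First, by (1) and (2) the pair $(I_s,(\omega_I)_s)$ is an element of $E^d(R_s,L_s)$, and by (3) it is zero, so \thmref{tcecg} over the $d$--dimensional ring $R_s$ shows that $(\omega_I)_s$ is a \emph{global} orientation: there is a surjection $\Theta\colon P_s\surj I_s$ with $\Theta\otimes(R/I)_s=(\omega_I)_s$. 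Next I would choose a lift $\phi\colon P\to I$ of $\omega_I$ over $R$, set $K_0=\phi(P)$ (so $I=K_0+I^2$), and by \lemref{MKL} pick $e\in I^2$ with $I=\langle K_0,e\rangle$ and $e(1-e)\in K_0$. Since $\phi_s$ and $\Theta$ are two lifts of $(\omega_I)_s$, the map $\Theta-\phi_s$ has image in $I_s^2$; clearing denominators gives $N\ge 1$ and $\tau\colon P\to I^2$ with $\tau_s=s^N(\Theta-\phi_s)$, and the surjectivity of $\Theta$ forces $(K_0)_s+I_s^2=I_s$, so (Nakayama at each prime $\fp\supseteq I_s$ of $R_s$) $K_0$ already agrees with $I$ at every prime over $I$ that survives after inverting $s$.

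\textbf{The heart of the argument} is to feed $\phi$, $\tau$ and $e$ into \thmref{ee} (Eisenbud--Evans) and thereby modify $\phi$ to a surjection $\Phi\colon P\surj K$ with $\Phi\otimes R/I=\omega_I$ (so $I=K+I^2$ again), $K\subseteq I$, the general--position bound $\hh(K_g)\ge d$ for a suitable $g\in I$ (whence $\hh(K)=d$), and, crucially, $s^aI\subseteq K$ for some $a\ge 1$; this last property, where $\Theta$ and $\tau$ are used, makes $V(K)$ the disjoint union $V(I)\sqcup Z$ with $Z\subseteq V(s)$. As $\hh(K)=d$, the primes of $Z$ have height $d$ and are isolated, so $K$ admits a comaximal decomposition $K=I'\cap J$ with $I'+J=R$, $V(I')=V(I)$ and $V(J)=Z$; in particular $\hh(J)=\mu(J/J^2)=d$ and $s\in\sqrt J$. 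A further check --- using \lemref{rsl} and following \cite[Lemma~5.6]{BR00} --- that the $I'$--factor may be arranged to be $I$ with induced orientation $\omega_I$, i.e.\ $(I',\omega_{I'})=(I,\omega_I)$ in $E^d(R,L)$, finishes the construction.

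To conclude, since $P$ surjects onto $I\cap J$ with $I+J=R$, \thmref{tcecg} gives $(I\cap J,\omega_{I\cap J})=0$ in $E^d(R,L)$, and by the defining relations of the Euler class group this reads $0=(I,\omega_I)+(J,\omega_J)$, where $\omega_J$ is the orientation induced on $J$ by $\Phi$. Hence $(I,\omega_I)=-(J,\omega_J)=(J,\omega_J')$, with $\omega_J'$ obtained from $\omega_J$ by composing with an automorphism of $(L/JL)\oplus(R/J)^{d-1}$ of determinant $-1$, and $s\in\sqrt J$; this $(J,\omega_J')$ is the required pair.

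\textbf{The main obstacle} I expect is the second paragraph: the general--position move must be carried out so that it simultaneously preserves $\omega_I$ (this is forced by keeping the correction inside $I^2$ and using \lemref{MKL}) and pushes the newly created height-$d$ component entirely into $V(s)$ --- and it is precisely here that the hypothesis $(I,\omega_I)_s=0$ is essential, entering through the surjection $\Theta$ and the map $\tau$. I would handle all of this by imitating the proof of \cite[Lemma~5.6]{BR00} closely, which is also the route the paper's proof takes.
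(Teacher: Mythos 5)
Your overall strategy (globalize over $R_s$, split off a residual component, land it in $V(s)$) points in the right direction, but the ``heart of the argument'' does not go through. You claim that a single application of Eisenbud--Evans (\thmref{ee}) can modify $\phi$ to $\Phi\colon P\surj K$ with $\Phi\otimes R/I=\omega_I$, $\hh(K_g)\ge d$, \emph{and} $s^aI\subseteq K$. The third condition says precisely that $K_s=I_s$, i.e.\ that the ``bad locus'' $V(K)\setminus V(I)$ is contained in $V(s)$. But \thmref{ee} only controls the \emph{height} of this bad locus (that $\hh(K_g)\ge d$); it gives no control whatever over \emph{where} the minimal primes of $K$ not containing $I$ sit relative to $V(s)$. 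Your first paragraph does not rescue this either: from $I=K_0+I^2$ one gets $K_\fp=I_\fp$ for $\fp\supseteq I$ by Nakayama, but this says nothing at primes $\fp$ with $I_\fp=R_\fp$ and $K_\fp$ proper, which are exactly the primes one needs to worry about. So the claim $s^aI\subseteq K$ is unjustified, and I do not see how $\Theta$ and $\tau$ enter \thmref{ee} to force it.

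The paper's proof is a genuinely two-stage argument, and both stages are needed. In stage one it works modulo $B=R/(sR\cap I^2)$ and applies \lemref{kl} there, which puts the idempotent element $e$ of \lemref{MKL} into $sR\cap I^2$, \emph{not} merely $I^2$. Because $e\in sR$ and $1-e\in I_1=\langle J_{\Gamma_I},1-e\rangle$, the residual ideal $I_1$ is forced to be \emph{comaximal} with $s$ (the opposite of landing it in $V(s)$). Only in stage two, after transporting the triviality $(I,\omega_I)_s=0$ through the relation $(I,\omega_I)+(I_1,\omega_1')=0$ to get $(I_1,\omega_1')_s=0$ and hence a surjective lift over $R_s$, can one invoke the mechanism of \cite[Lemma~5.6]{BR00} to split $I_1$ once more into a piece with $J_s=R_s$. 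Your one-step version collapses these two stages and loses the crucial control on $e$. A separate (secondary) worry is that your conclusion gives $(I,\omega_I)=-(J,\omega_J)$, so you must rewrite $-(J,\omega_J)$ as $(J,\omega_J')$ via a determinant $-1$ change of orientation; the paper's two-stage route avoids this by producing $(I,\omega_I)=(J,\omega_J)$ directly as a difference of two relations, with no sign to absorb.
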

		
	\begin{proof}
	 
		We define $B:=\frac{R}{sR\cap I^2}$. Let $`bar'$ denote going modulo the ideal $sR\cap I^2$. Since $s$ is a non-zero divisor, we have  $\dim(B)\le d-1$. 
  Therefore, in the ring $B$, we have 
  $$\ol{\omega_I}:(\ol{L}/\ol{I}\ol{L})\oplus (B^{d-1})\surj \ol{I}/\ol{ I}^2.$$
  
  Using Lemma \ref{kl}, we get a surjective map $\ol{\Gamma_I}:\ol L\oplus B^{d-1}\surj \ol I$ such that $\ol{\Gamma_I}\otimes B/\ol{I}=\ol \omega_I$. Let $\Gamma_I:L\oplus R^{d-1}\to I$ be a lift of $\ol{\Gamma_I}$ which a priory need not be surjective. Then we get $I=J_{\Gamma_I}+I^2\cap sR$, where $J_{\Gamma_I}=\Gamma_I(L\oplus R^{d-1})$. Using Lemma \ref{MKL}, there exits $e\in I^2\cap sR$ such that $I=<J_{\Gamma_I},e>$, and $e(1-e)\in J_{\Gamma_I}$. Moreover, using Theorem \ref{ee} replacing ${\Gamma_I}$ by ${\Gamma_I+e\Gamma_I'}$, for some suitably chosen $\Gamma_I'\in (L\oplus R^{d-1})^*$, we have $\hh(({J_{\Gamma_I}})_e)\ge d$, that is, ${(J_{\Gamma_I}})_e=R_e$ or $\hh(({J_{\Gamma_I}})_e)= d$.  If ${(J_{\Gamma_I}})_e=R_e$, then $e\in \sqrt{{(J_{\Gamma_I}})}$. Hence using Lemma \ref{rsl}, we have $J_{\Gamma_I}=I$. This gives us $\Gamma_I:L\oplus R^{d-1}\surj I$ is a surjective lift of $\omega_I$. Hence using Theorem \ref{tcecg} we have $(I,\omega_I)=0$ in $E^d(R,L)$ which is a contradiction.
  
  Therefore, we can assume that $\hh((J_{\Gamma_I})_e)=d$. Let $I_1=<J_{\Gamma_I},1-e>$. Then note that we have the following:
		
		\begin{enumerate}
			\item[(a)] $\Gamma_I:L\oplus R^{d-1} \surj I\cap I_1=J_{\Gamma_I}$ is a surjection;
			\item[(b)]  $I_1+eR=I_1+sR=I_1+I=R$;
			\item[(c)] $\hh(I_1)=d$.
		\end{enumerate} 
	
	 Since $I_1+I=R$, using Chinese remainder theorem,
  we have $R/(I\cap I_1)\cong R/I \oplus R/I_1$. Hence
  $\Gamma_I$ induces local orientations of $I$ and $I_1$ with respect to $L$. Since $\Gamma_I$ is a lift of $\omega_I$, any local orientation of $I$ with respect to $L$ induced by  $\Gamma_I$, matches with $(I,\omega_I)$ in $E^d(R,L).$ Let $\omega_1'$ be a local orientation of $I_1$ with respect to $L$ induced by $\Gamma_I$. Then by (a) we have 
  \begin{equation}\label{eqn-1}
      (I,\omega_I)+(I_1,\omega_1')=0.
  \end{equation}
	 
	  As $I_1+sR=R$ and $\hh(I_1)=d$, we get $\hh((I_1)_s)=d$. These give us the class of $ ({I_1},\omega_1')_s$ is in $E^d(R_s,L_s)$. Since we have $(I,\omega_I)_s=0$ in $E^d(R_s,L_s)$, from \ref{eqn-1}, we have $({I_1},\omega_1')_s=0$ in $E^d(R_s,L_s)$. Therefore, using Theorem \ref{tcecg}, we obtain a surjective lift $$\tau':L_s\oplus {R_s}^{d-1}\surj (I_1)_s$$
   of $\omega_1'\otimes (R/I_1)_s$. 
   
   Since $L$ is finitely generated, there exists an integer $k\ge 1$ such that $(s^{2k}\tau')(L_s\oplus {R_s}^{d-1})\subset I_1$. Let $\tau =(s^{2k}\tau')\circ i:L\oplus R^{d-1}\to I_1$, where $i:L\oplus R^{d-1}\to L_s\oplus {R_s}^{d-1}$ is the canonical map induced by the localization map. Since $s$ is a unit modulo $I_1$, the map $\tau\otimes R/I_1:(L/I_1L)\oplus (R/I_1)^{d-1}\surj I_1/I_1^2$ is surjective. That is, $\tau\otimes R/I_1$ is a local orientation of $I_1$ with respect to $L$. Let $\omega_1''=\tau\otimes R/I_1$. By the construction of $\tau$ we have $(I_1,\omega_1')=(I_1,u^2\omega_1'')$, for some $u\in (R/I_1)^*$ (here $u$ will be some power of $s$). Using \cite[Lemma 5.4]{BR00} we get $(I_1,u^2\omega_1'')=(I_1,\omega_1'')$ this implies that $(I_1,\omega_1')=(I_1,\omega_1'')$. Therefore, from \ref{eqn-1} we get
\begin{equation}\label{eqn-2}
    (I,\omega_1)+(I_1,\omega_1'')=0.
\end{equation}

	  Now adapting the proof of \cite[Lemma 5.6, paragraph 2]{BR00}, we get $(J, \omega_J)\in E^d(R,L)$ such that the followings hold:
	  
	  \begin{enumerate}
	  	\item $(J)_s=R_s$;
	  	\item $I_1+J=R$;
	  	\item $(I_1,\omega_1'')+(J, \omega_J)=0$ in $E^d(R,L)$.
	  \end{enumerate}

	Since $(J)_s=R_s$ we have $s\in \sqrt{J}$. Now combining (\ref{eqn-2}) and $(3)$, we have $(I,\omega_I)=(J,\omega_J)$ in $E^d(R,L)$. This finishes the proof. 	   
	\end{proof}		


 \begin{thm}\label{thm:main}
 Let $A$ be a ring as in (\ref{defn:imp}). Let $L$ be a finitely generated projective $A$-module of rank one. Let $I\subset A$ be an ideal such that $\hh(I)=\dim(A)=d+1$. Further, assume that there is a surjection $\omega_I:(L/IL)\oplus (A/I)^d\surj I/I^2$. Then there exists a surjection $\phi:L\oplus A\surj I$ such that $\phi\otimes A/I=\omega_I$. In particular, 
 the $(d+1)$-th Euler class group $E^{d+1}(A,L)$ is trivial.
 \end{thm}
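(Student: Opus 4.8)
The plan is to reduce the statement over the geometric subring $A$ to the corresponding (known) statement over the localization $A_s = R_s[T, f^n]$, and then to promote the resulting partial lift back to $A$ by invoking Lemma~\ref{brcl} together with the triviality criterion of Theorem~\ref{tcecg}. First I would dispose of a reduction: since $\hh(I) = d+1 = \dim(A)$, the ideal $I$ is of maximal height, so $\dim(A/I) = 0$ and $A/I$ is a product of Artinian local rings; consequently $\Spec(A/I)$ need not be connected, but after breaking $I$ into its (finitely many) primary components one may assume $\Spec(A/I)$ is connected and so $(I,\omega_I)$ defines a genuine element of $E^{d+1}(A,L)$. Here one must first check that $A$ is Noetherian of dimension $d+1$ (this is built into Definition~\ref{defn:imp}(1)) so that the Euler class machinery of Section~\ref{sec:Prelim} applies verbatim. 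The final ``In particular'' clause is then immediate from Theorem~\ref{tcecg}: once every such $(I,\omega_I)$ is shown to be a global orientation, every generator of the group $G$ defining $E^{d+1}(A,L)$ lies in the subgroup $H$, so $E^{d+1}(A,L) = 0$.

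The heart of the argument is the localization step. By Definition~\ref{defn:imp}(2) there is a non-zero divisor $s \in R$ with $A_s = R_s[T, f^n]$. I would first verify the hypotheses of Lemma~\ref{brcl} for this $s$: that $\dim(A_s) = d+1$ (true since $A_s = R_s[T,f^n]$ is a localization of a polynomial-type ring over $R_s$, $\dim R_s \le d$, and it contains $A$ which already has dimension $d+1$ — one should be a little careful that localizing at $s$ does not drop the dimension, which is where one uses that the generic fibre behaves well), and that $(I,\omega_I)_s = 0$ in $E^{d+1}(A_s, L_s)$. The latter is exactly where the known results for overrings of polynomial rings enter — specifically the efficient-generation / Euler-class triviality results for rings of the form $R_s[T, f^n]$ (this is an overring of the polynomial ring $R_s[T]$, so Das–Zinna \cite{DZ15}, extending Mohan Kumar \cite{NMK78} and Mandal \cite{Ma82}, gives that the relevant Euler class group vanishes, hence $\omega_I \otimes (A/I)_s$ lifts to a surjection $L_s \oplus A_s^d \surj I_s$). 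With hypotheses (1)–(3) of Lemma~\ref{brcl} in hand, that lemma produces $(J,\omega_J) \in E^{d+1}(A,L)$ with $(I,\omega_I) = (J,\omega_J)$ and $s \in \sqrt{J}$.

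It remains to show that this new cycle $(J,\omega_J)$, which is ``away from $V(s)$'' in the sense that $s \in \sqrt J$, is itself a global orientation. The idea is that $A[1/s]$-locally the ideal $J$ is the unit ideal, so $J$ ``lives over'' $\Spec(A) \setminus V(s)$; but on that locus $A$ and the polynomial-type ring $R_s[T,f^n]$ agree... no — rather, the point is the reverse: since $s \in \sqrt J$, the ideal $J$ is supported on $V(s) = \Spec(A/sA)$, and $A/sA$ is a quotient of $R$, which has dimension $\le d$. So one can now run the Eisenbud–Evans moving argument (Theorem~\ref{ee}) together with Lemma~\ref{kl} applied over a ring of dimension $\le d$: lift $\omega_J$ to $\alpha : L \oplus A^d \to J$, write $J = \alpha(L \oplus A^d) + J^2$, use Lemma~\ref{MKL} to get $e \in J^2$ with $J = \langle \alpha(L\oplus A^d), e\rangle$ and $e(1-e) \in \alpha(L\oplus A^d)$, then use Theorem~\ref{ee} to replace $\alpha$ by $\alpha + e\beta$ so that the image ideal $J'$ has $\hh(J'_e) \ge d+1$; since $e \in sA$ and $\dim(A/sA) \le d$ forces $\hh(J'_e) = d+1$ to be impossible unless $J'_e = A_e$, we get $e \in \sqrt{J'}$ and conclude $J = J'$ by Lemma~\ref{rsl}, producing the surjective lift. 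The main obstacle, and the step demanding the most care, is controlling the dimension under localization at $s$ and verifying that the auxiliary ring $A_s = R_s[T, f^n]$ genuinely falls under the scope of the known overring results of \cite{DZ15}, \cite{NMK78}; everything else is a careful but standard patching of the Bhatwadekar–Sridharan technology.
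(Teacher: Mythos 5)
Your reduction via Lemma \ref{brcl} and \cite{DZ15} matches the paper's Step 1, but your final step --- the one that is supposed to show that a cycle $(J,\omega_J)$ with $s\in\sqrt J$ is a global orientation --- does not work, and this is exactly where the paper's real work lies. First, in your moving argument the element $e$ produced by Lemma \ref{MKL} only lies in $J^2$, not in $sA$; to get $e\in sA\cap J^2$ you must first lift the orientation over $A/(sA\cap J^2)$ (which has dimension $\le d$) via Lemma \ref{kl} and then apply Lemma \ref{MKL} to $J=K+(sA\cap J^2)$. More seriously, even granting $e\in sA\cap J^2$, your dimension count is wrong: after Eisenbud--Evans you only know $\hh(J'_e)\ge d+1$, and you claim that $\hh(J'_e)=d+1$ with $J'_e\neq A_e$ is impossible because $\dim(A/sA)\le d$. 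But a prime $\fp\supset J'$ with $e\notin\fp$ automatically avoids $s$ (since $e\in sA$), so it corresponds to a prime of $A_s$, and in the main case $\dim(A_s)=d+1$, so primes of height $d+1$ over $J'$ avoiding $e$ are perfectly possible; $\dim(A/sA)\le d$ says nothing about them. (The inequality $\dim(A_s)\le d$, which would rescue your argument, is precisely the easy Case 2 of the paper's Step 1, where Lemma \ref{brcl} is not even needed; note also that $\dim(A_s)=d+1$ is not automatic, contrary to your assertion --- the paper treats the two possibilities separately.) So your argument only re-proves that one may assume $s\in\sqrt I$; it does not produce the surjective lift.

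The missing idea is the paper's Step 2, a genuine patching argument. Since $s\in\sqrt I$, one has $I_s=A_s$, so take the trivial surjection $\omega_2:(A_s)^{d+1}\surj I_s$ there; on the other side, the Moving Lemma \cite[Corollary 2.14]{BR00} produces an ideal $I'$ comaximal with $I$ and with $s$ together with a surjection $\beta:L\oplus A^d\surj I\cap I'$ lifting $\omega_I$, which becomes a surjection onto $I_{1+sA}$ after localizing at $1+sA$. These two surjections are then glued over $A_{s(1+sA)}$ --- an overring of a polynomial ring over $R_{s(1+sR)}$, of dimension $\le d$ --- using Rao's transitivity theorem for the elementary action on unimodular rows \cite[Theorem 5.1(I)]{Rao82} together with Quillen's splitting lemma \cite[Lemma 1]{Quillen76}, via a fiber-product diagram; no argument of the ``lift over a $d$-dimensional quotient and move'' type can substitute for this, because the surjection must be produced over $A$ itself, which has dimension $d+1$. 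A secondary omission: you never arrange $L_s\cong A_s$ (the paper shrinks $s$ so that $L$ also trivializes), which is needed both to invoke \cite{DZ15} (stated with trivial coefficients) and to treat the localized surjections as unimodular rows in the patching step.
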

 
 \begin{proof}
      Let $S$ be the multiplicatively closed subset of $R$ consisting all non-zero divisors. Then it is easy to check that $S^{-1}(R)$ is an Artinian ring and 
      $S^{-1}A\cong S^{-1}R[T,f^n]$. To show that
      $S^{-1}L$ is a free module, we can assume that $S^{-1}R[T,f^n]$ is reduced. But then $S^{-1}R$
      is a product of fields, hence $S^{-1}R[T,f^n]$
      is a product of  principal ideal domains. 
      So we conclude that $S^{-1}L$ is a free $S^{-1}A$-module of rank one.
      
      Since $L$ is finitely generated, there exists $s_1\in S$ such that $L_{s_1}$ is a free $A_{s_1}$-module of rank one.  Since $A$ is a geometric ring, there exists a non-zero divisor $s_2\in R$ such that $A_{s_2}=R_{s_2}[T,f^n]$. Let $s=s_1s_2$. Then note that $s\in R$ is a non-zero divisor, such that $L_s\cong A_s$ and  $A_s= R_s[T,f^n]$. Now we prove the theorem in the following two steps.
		
		\textbf{Step 1.} In this step, we will show that we can assume that $s\in \sqrt I$.

		{\bf Proof of the Step 1.}  Note that $\dim(A_s)\le \dim(A)=d+1$. Therefore either $\dim(A_s)=d+1$ or $\le d$. We shall handle these two cases separately.
  
\smallskip		

	{\bf Case 1.}	Let us assume that $\dim(A_s)=d+1$. 
 
\smallskip

 Recall that there is a one-to-one correspondence between the prime ideals of $A_s$ with the prime ideals of the ring $A$, not containing $s$. Therefore, for any prime ideal $\fp\subset A$, $s\not\in \fp$ implies that $\hh(\fp_s)=\hh(\fp)$. Let $\sqrt{I}=\bigcap_{i=1}^n\fp_i $, where $\fp_i$'s are prime ideals of the ring $A$. Since $\hh (I)=\hh (\sqrt{I})=d+1$ and $\dim(A)=d+1$, we have $\hh(\fp_i)=d+1$ for all $i=1,\ldots,n$. Hence we have either  $\hh(I_s)=d+1$ or $I_s=A_s$. Now if $I_s=A_s$ then $s\in \sqrt{I}$. So we have already achieved the claim of Step 1.
 
 Therefore, we assume that $\hh(I_s)=d+1$. 	Observe that, $A_s$ is an overring of the polynomial ring $R[T]$. We have $\hh(I)=d+1$, and $\hh(I_s)= d+1\ge 2$. Therefore, using \cite[Theorem 3.2 ]{DZ15}, we have $(I_s,\omega_I\otimes (A/I)_s)=0$ in $E^{d+1}(A_s)$. Note that if $(I,\omega_I)=0$ in $E^{d+1}(A,L)$, then using Theorem \ref{tcecg} we get $\omega_I$ is a global orientation of $I$ with respect to $L$. Thus the theorem is proved. Therefore, without loss of generality, we may assume that $(I,\omega_I)\not=0$ in $E^{d+1}(A,L)$. Hence by the previous reductions we may assume that the triplet $(A,(I,\omega_I), s)$ satisfies all the hypotheses of Lemma \ref{brcl}. Applying Lemma \ref{brcl}, we get $(J,\omega_J)\in E^{d+1}(A,L)$ such that $(I,\omega_I)=(J,\omega_J)$ and $s\in \sqrt{J}$. Note that to prove the theorem in view of Theorem \ref{tcecg}, it is enough to show that $(J,\omega_J)=0$ in $E^{d+1}(A,L)$. Therefore, replacing $(I,\omega_I)$ by $(J,\omega_J)$ we may assume that $s\in \sqrt{I}$. This completes the proof of Step 1 in the case whenever $\dim(A_s)=d+1$.

\smallskip
    
	{\bf Case 2.}	Suppose that $\dim(A_s)\le d$.

 \smallskip
 
 Let $B=A/sA\cap I^2$. Let `bar' denote going modulo the ideal $sA\cap I^2$. Since $\dim(A_s)<d+1$, by Lemma \ref{kl}, $\ol{\omega_I}$ is a global orientation of $\ol I$ with respect to $\ol L$. Let $\Gamma:L\oplus A^d\to I$ be a lift of $\ol {\omega_I}$. Then we have $I=K+I^2\cap sA$, where $K=\Gamma(L\oplus A^d)$. Using Lemma \ref{MKL}, there exists $e\in I^2\cap <s>$ such that $I=<K,e>$, and $e(1-e)\in K$. Moreover, using Theorem \ref{ee}, we can make the assumption $\hh (K'_e)\ge d+1$, where $K'=(\Gamma+e\lambda)(L\oplus A^d)$, for some suitably chosen $\lambda\in (L\oplus A^d)^*$. As we have $I=<K',e>$ and $e\in <s>$, these imply that $\hh(K'_s)\ge \hh(K'_e)\ge d+1$. Since $\dim(A_s)<d+1$, we get $s\in \sqrt{K'}\subset \sqrt{I}$. This finishes the proof of Step 1.
		
	\smallskip
		
		\textbf{Step 2.} In this step, we show that $\omega_I$ is a global orientation of $I$ with respect to $L$.

\smallskip
  
		{\bf Proof of Step 2.} Note that by Theorem \ref{tcecg} it is enough to show that $(I,\omega_I)=0$ in $E^{d+1}(A,L)$. Since $s$ is a non-zero divisor, by Moving lemma \cite[Corollary 2.14]{BR00} we get an ideal $I'\subset A$ of height $d+1$ such that $I'+I=I'+<s>=A$ and a surjection $\beta :L\oplus A^d\surj I\cap I'$, with $\beta\otimes A/I=\omega_I$. Since $I'+<s>=A$ the map $\omega_1:=\beta\otimes A_{1+sA}: L_{1+sA}\oplus  (A_{1+sA})^d\surj I_{1+sA}\cap I'_{1+sA}= I_{1+sA} $ is surjective. Observe that $\omega_1\otimes (A_{1+sA}/I_{1+sA})=(\beta\otimes A/I)\otimes (A_{1+sA}/I_{1+sA})=\omega_I \otimes (A_{1+sA}/I_{1+sA})$. 
		
		In the ring $A_s$ we have $I_s=A_s$, as $s\in \sqrt I$. Let $\omega_2:(A_s)^{d+1}\surj I_s$, which sends $e_1$ to $1$ and $e_i$ to $0$ for all $i>1$. Since $I_s=A_s$, the equality $\omega_2\otimes (A_{s}/I_{s})=\omega_I\otimes A_{s}$ holds trivially. Consider the following fiber product diagram:
		$$\begin{tikzcd}
			L\oplus A^{d} \ar[rrr] \ar[dd]\ar[dr,twoheadrightarrow, dashed] &&&  (A^{d+1})_s \ar[dd]\ar[dr, twoheadrightarrow ] \\
			& I \ar[rrr,crossing over] \ar[dd]  &&& I_s\ar[dd, twoheadrightarrow] \\
			(L\oplus A^{d})_{1+sA} \ar[rr] \ar[dr,twoheadrightarrow] & &  (L\oplus A^{d})_{s(1+sA)} \ar[dr,twoheadrightarrow] \ar[r,"\sim"] & (A^{d+1})_{s(1+sA)}\ar[dr,twoheadrightarrow] \\
			& I_{1+sA}\ar[rr] && I_{s(1+sA)} \ar[r] & I_{s(1+sA)}\ar[from=uu,crossing over]
		\end{tikzcd}$$
		Note that $ A_{s(1+sA)}=(R_s[T,f^n])_{1+sA}=S'^{-1}R'[T]$, where $R'=R_{s(1+sR)}$ and $S'$ is some multiplicatively closed subset of non-zero divisors of $R'[T]$. Therefore, $A_{s(1+sA)}$ is an overring of the polynomial ring $R'[T]$. Since $\dim(R')\le d-1$, we have $\dim(A_{s(1+sA)})\le d$. Also observe that since $L_{s(1+sA)}\cong A_{s(1+sA)}$, we get $(\omega_1)_s, (\omega_2)_{1+sA}\in \Um_{d+1}( A_{s(1+sA)})$. Hence by a theorem of Rao \cite[Theorem 5.1 (I)]{Rao82}, there exists $\epsilon\in E_{d+1}(A_{s(1+sA)})$ such that $((\omega_2)_{1+sA})\epsilon=(\omega_1)_s$. Since elementary matrices are homotopic to identity, using Quillen's splitting theorem \cite[Lemma 1]{Quillen76} the matrix $\epsilon$ splits. Using the universal property of fiber product, we get a unique map $\phi:L\oplus A^{d}\to I$ such that $\phi_{1+sA}=\omega_1$ and $\phi_s=\omega_2$. To prove that $\phi$ is a surjective lift of $\omega_I$ note that it is enough to check locally. Let $\fm\subset A$ be a maximal ideal. Then $\fm$ is either co-maximal with $s$ or co-maximal with $1+s$. Therefore, in any case, in the ring $A_{\fm}$, the map $\phi$ matches with either of $\omega_1$ or $\omega_2$. This completes the proof. 
 \end{proof}

\begin{remk}\label{non:Noe}
 Taking $L= A$ in Theorem \ref{thm:main} we get for any ideal $I\subset A$ if $\mu(I/I^2)=\dim(A)=\hh(I)$ then $\mu(I)=\dim(A)$. Moreover, it says that any set of $d+1$ generators of $I/I^2$ lifts to a set of $d+1$ generators of $I$. Here we would like to mention that one can weaken the hypothesis that $A$ is a Noetherian ring by only assuming $I$ is finitely generated. This can be done in the following way. Let $I=<f_1,\ldots,f_k>$. Since $A$ is geometric subring of $R[T,f^n]$, there exists $g_1,g_2\in A$ such that $\frac{g_1}{s^{m_1}}=T$ and $\frac{g_2}{s^{m_2}}=f^n$ in $R_s[T,f^n]$. Now consider $B=R[f_1,\ldots,f_n,g_1,g_2]$. Then $B$ is finitely generated $R-$algebra such that $I\subset B$ and $B_s=A_s=R_s[T,f^n]$. Now it is enough to show that $I$ is efficiently generated in $B$.
\end{remk}

 \begin{remk} \label{rem:comp}
     Let $R$ be a commutative Noetherian ring of dimension $d$ and $I$ an ideal of $R$ with $\hh(I)=d$. If
     $\mu(I/I^2)=d$, then it is easy to check that $I$ is
     a local complete intersection ideal of height $d$. Hence Theorem \ref{thm:main} says that every local complete intersection ideal of height $d+1$ in $A$ is a complete intersection ideal of height $d+1$ in $A$.
 \end{remk}

 Continuing with the same notations as in Theorem \ref{thm:main}, in the next theorem we show that any projective $A$-module $P$ of rank $d+1$ splits off a free summand of rank one. Note that here we are assuming the ring is Noetherian. For Rees algebras or extended Rees algebras this has been proved in \cite{RS19}.
		
		\begin{thm}	
         \label{eue}
		Let $A$ be as in Theorem \ref{thm:main}.
  Let $P$ be a projective $A$-module such that $\rank(P)=d+1$. Then $P$ has a unimodular element.    
		\end{thm}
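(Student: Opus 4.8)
The plan is to deduce this from Theorem~\ref{thm:main} by the standard device of reducing the splitting of a rank $(d+1)$ projective module over a $(d{+}1)$-dimensional ring to the triviality of an Euler class. Let me sketch the steps.

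First I would reduce to the case where $P$ has trivial determinant up to the ``right'' line bundle: set $L = \det(P) = \wedge^{d+1} P$, a rank one projective $A$-module. By Serre's splitting theorem (or Bass cancellation, since $\dim A = d+1$ and $\rank P = d+1$), one knows that $P \cong Q \oplus A$ for some $Q$ of rank $d$ \emph{provided} we can produce a unimodular element; but that is exactly what we want, so instead we argue via the Euler class. The key input is: a projective module $P$ of rank $d+1$ over a $(d{+}1)$-dimensional Noetherian ring $A$ has a unimodular element if and only if its Euler class $e(P,\chi) \in E^{d+1}(A,L)$ vanishes, for a chosen orientation $\chi: \wedge^{d+1}(L \oplus A^{d}) \iso \wedge^{d+1} P$. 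This is precisely the content of the Euler class theory recalled in Section~\ref{sec:Prelim} (the obstruction-theoretic formalism of Bhatwadekar--Sridharan), once one checks that the Euler class is well defined in this setting.

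Second, and this is where Theorem~\ref{thm:main} enters: I would choose a surjection $\lambda: P \surj I$ onto an ideal $I$ of height $d+1$ (such $\lambda$ exists by the Eisenbud--Evans estimate, Theorem~\ref{ee}, applied to a generic element of $P^*$ — or directly because $\dim A = d+1$). This produces an Euler cycle $(I, \omega_I)$ with $\omega_I : (L/IL)\oplus (A/I)^d \surj I/I^2$, representing $e(P,\chi)$. By Theorem~\ref{thm:main}, every such $\omega_I$ is a global orientation, i.e.\ lifts to a surjection $\phi: L \oplus A^d \surj I$; equivalently, by Theorem~\ref{tcecg}, $(I,\omega_I) = 0$ in $E^{d+1}(A,L)$. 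Hence the Euler class vanishes, and therefore $P$ has a unimodular element.

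\textbf{The main obstacle} I anticipate is the well-definedness of the Euler class $e(P,\chi)$ for the ring $A$: the excerpt notes (just before Theorem~\ref{tcecg}) that Bhatwadekar--Sridharan established well-definedness only when $R$ contains an infinite field and either $R$ is smooth or $\frac{1}{(d{+}1)!}\in R$, which need not hold for an arbitrary geometric subring. So I expect the proof actually proceeds more carefully: rather than invoking the Euler \emph{class} of $P$, one picks a single surjection $\lambda: P \surj I$ with $\hh(I) = d+1$, applies Theorem~\ref{thm:main} to the resulting $\omega_I$ to get a surjective lift $\phi: L \oplus A^d \surj I$, and then uses a standard ``subtraction'' argument (comparing the two surjections $\lambda$ and $\phi$ onto the same ideal $I$ and invoking a result such as \cite[Theorem~4.3 or Proposition~3.1]{BR00} on when $P/IP \cong (L\oplus A^d)/I(L\oplus A^d)$ can be lifted to an isomorphism $P \cong L \oplus A^d$) to conclude $P \cong L \oplus A^d$, which visibly splits off a free summand of rank one. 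This sidesteps the well-definedness issue entirely, at the cost of a slightly more hands-on comparison of the two generating data for $I$.
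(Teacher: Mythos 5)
Your self-correction in the final paragraph lands exactly on the paper's proof. You rightly observe that concluding from the vanishing of the Euler \emph{class} $e(P,\chi)$ would require its well-definedness, which the recollection section only asserts under extra hypotheses (an infinite field plus smoothness or $\frac{1}{d!}\in R$) that a geometric subring $A$ need not satisfy. The paper therefore does precisely what you describe as the workaround: it sets $L=\wedge^{d+1}P$, fixes a single triplet $(P,\lambda,\chi)$ with $\lambda:P\surj I$ of height $d+1$, takes the induced Euler \emph{cycle} $(I,\omega_I)$ (never asserting it is a well-defined class), applies Theorem~\ref{thm:main} to produce a surjective lift $\beta:L\oplus A^d\surj I$ of $\omega_I$, and then invokes the Subtraction principle, citing \cite[Corollary~3.4]{BR00}, to conclude $P$ has a unimodular element. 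The only refinement is the exact citation: it is Corollary~3.4 of \cite{BR00}, not the Proposition~3.1 or Theorem~4.3 you tentatively suggested. The first sketch in your proposal (deducing the splitting from ``the Euler class vanishes'') would need the well-definedness you flagged, so it should not be presented as the argument; but since you correctly identified both the obstacle and the route around it, the proposal is sound and matches the paper in substance.
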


   \begin{proof}
We deduce the theorem from Theorem \ref{thm:main} and Subtraction principle \cite[Corollary 3.4]{BR00}.
  For this,  let $L:= \wedge^{d+1} P$ and $Q:= L\oplus A^{d-1}$.
  Then $\wedge^{d}Q=L$.
  Let $\chi:\wedge^{d+1} P\cong \wedge^{d+1}(Q\oplus A)(\cong L)$ be an isomorphism. Let $(I,\omega_I)\in E^{d+1}(A,L)$ be a Euler cycle induced by the trilet $(P,\lambda,\chi)$ for some suitably chosen $\lambda: P\surj I$. This means that there exists an isomorphism $\phi :(L/IL)\oplus (A/I)^{d} \iso  P/IP$, such that $\wedge^{d+1}\phi=\chi\otimes A/I$ and a surjection $\omega_I:=(\lambda\otimes A/I)\circ\phi: (L/IL)\oplus(A/I)^{d}\surj I/I^2$. 
  
  By Theorem \ref{thm:main}
  we have a surjective lift $\beta:{L\oplus A^{d}}\surj I$ such that $\beta \otimes A/I=\omega_I$. That is, $\beta \otimes A/I=(\lambda\otimes A/I)\circ\phi$. Now from \cite[Corollary 3.4]{BR00}, it follows that $P$ has a unimodular element. 
   \end{proof}

In the next theorem, we relax some hypothesis on the height of the ideal of Theorem \ref{thm:main}.

\begin{thm} \label{imt}
			Let $A$ be as in (\ref{defn:imp}).
   Let $I\subset A$ be an ideal such that $\hh(I)\ge 2$ and $\mu(I/I^2)\le d+1$. Then $\mu(I)\le d+1$. Moreover, any set of $d+1$ generators of $I=<a_0,\ldots,a_d>+I^2$ lifts to a set of $d+1$ generators of $I$.
\end{thm}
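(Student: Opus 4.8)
The plan is to reduce to the height $d+1$ case already handled in Theorem \ref{thm:main}. The obstacle is that $\hh(I)$ may be strictly smaller than $d+1$, so we cannot apply Theorem \ref{thm:main} directly; the standard device is to peel off the minimal primes of $I$ that have small height by adding a generic element, replacing $I$ by an ideal $I'$ with the same image modulo squares, with $\hh(I') = d+1$, and with $I'$ comaximal with a conductor ideal that lets us transfer the lift back. First I would set $L = A$ and write the given surjection as $\omega_I \colon (A/I)^{d+1} \surj I/I^2$, lifting it to a (possibly non-surjective) map $\alpha \colon A^{d+1} \to I$ with image $J_0$, so that $I = J_0 + I^2$. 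By Lemma \ref{MKL} there is $e \in I^2$ with $I = \langle J_0, e\rangle$ and $e(1-e) \in J_0$, and by Theorem \ref{ee} (applied with the rank-$(d+1)$ free module $A^{d+1}$, after first localizing so that the relevant dimension is $d+1$) we may adjust $\alpha$ to $\alpha' = \alpha + e\beta$ so that, writing $J = \alpha'(A^{d+1})$, we have $\hh(J_e) \ge d+1$. As before $I = \langle J, e\rangle$, so if $J_e = A_e$ we get $e \in \sqrt J$ and Lemma \ref{rsl} gives $J = I$, i.e.\ $\alpha'$ is already a surjective lift and we are done.

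The remaining case is $\hh(J_e) = d+1$ exactly. Here I would set $I_1 = \langle J, 1-e\rangle$, so that $J = I \cap I_1$, $I_1 + I = A$, $I_1 + eA = A$, and $\hh(I_1) = d+1$ (the last point: outside $V(e)$ one has $\hh \ge d+1$ by construction, and $e$ is a unit modulo $I_1$). Thus $\alpha' \colon A^{d+1} \surj I \cap I_1$, and by the Chinese Remainder Theorem $\alpha'$ induces local orientations $\omega_I$ of $I$ and $\omega_1$ of $I_1$. Now $I_1$ has height $\dim A = d+1$, so Theorem \ref{thm:main} applies to the pair $(I_1, \omega_1)$: there is a surjective lift $\gamma \colon A^{d+1} \surj I_1$ with $\gamma \otimes A/I_1 = \omega_1$. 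By the Subtraction/Addition principle of Bhatwadekar--Sridharan \cite[Corollary 3.4]{BR00} (or directly, using that $I + I_1 = A$ and that $\alpha'$ simultaneously lifts $\omega_I$ on $I$ and $\omega_1$ on $I_1$, with $\gamma$ lifting $\omega_1$), one obtains a surjective lift $A^{d+1} \surj I$ of $\omega_I$. This proves $\mu(I) \le d+1$ and that the chosen generators of $I/I^2$ lift.

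The main obstacle is verifying that Theorem \ref{thm:main} genuinely applies to $I_1$: one needs $\hh(I_1) = \dim(A) = d+1$ (not merely $\ge 2$), and one needs the induced local orientation $\omega_1$ of $I_1$ to be defined on $A/I_1$-modules of the right rank, which is automatic once $\alpha'$ is surjective onto $I \cap I_1$. A secondary point is the dimension bookkeeping in the appeal to Theorem \ref{ee}: we want $\hh(J_e) \ge d+1$, but $\dim A = d+1$ as well, so the argument shows $J_e = A_e$ or $\hh(J_e) = d+1$, and it is the first alternative that finishes the proof immediately while the second feeds into the comaximal-splitting argument. If $\mu(I/I^2)$ is strictly less than $d+1$ we may simply pad the given generators by zeros to reduce to the case $\mu(I/I^2) = d+1$, so no loss of generality there. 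Finally, the non-Noetherian remark (Remark \ref{non:Noe}) applies verbatim: replace $A$ by a finitely generated $R$-subalgebra $B$ containing $I$ with $B_s = A_s$, prove the statement for $B$, and transfer back.
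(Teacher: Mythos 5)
Your reduction is essentially the paper's: after Lemma \ref{MKL} and Theorem \ref{ee} you replace the $a_i$ by $b_i=a_i+e\lambda_i$ with $\hh(\langle b_0,\ldots,b_d\rangle_e)\ge d+1$, dispose of the degenerate case via Lemma \ref{rsl}, form the residual ideal $I_1=\langle b_0,\ldots,b_d,1-e\rangle$ of height $d+1$ comaximal with $I$, and apply Theorem \ref{thm:main} to $(I_1,\omega_1)$ to get a surjective lift $\gamma:A^{d+1}\surj I_1$. Up to this point the argument is correct and coincides with the paper's proof (the padding-by-zeros reduction to $\mu(I/I^2)=d+1$ is harmless, and note that Theorem \ref{ee} needs no preliminary localization -- it holds over any commutative ring).

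The gap is in the concluding subtraction step. The result you cite, \cite[Corollary 3.4]{BR00}, like all the addition/subtraction principles of Section 3 of \cite{BR00}, carries the hypothesis that the ideals involved have height equal to $\dim A$ (equivalently, it lives inside the Euler class group formalism, where only height-$(d+1)$ ideals appear); it is moreover a statement about a rank-$(d+1)$ projective module acquiring a unimodular element, not about lifting generators of an ideal. In Theorem \ref{imt} the whole point is that $\hh(I)$ may be anywhere between $2$ and $d$, so $\dim(A/I)$ can be positive and $(I,\omega_I)$ is not an element of $E^{d+1}(A)$ at all; Corollary 3.4 does not apply to the pair $(I,I_1)$. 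The parenthetical ``or directly'' does not repair this: passing from $\alpha':A^{d+1}\surj I\cap I_1$ together with a compatible surjection $A^{d+1}\surj I_1$ to a surjection $A^{d+1}\surj I$ lifting the given generators modulo $I^2$ is precisely the subtraction principle for ideals of non-maximal height, a genuinely nontrivial result. This is why the paper instead invokes the subtraction principle of Das--Sridharan \cite[Proposition 2.2]{DaSr10}, which is designed for comaximal pairs with $\hh(I_1)=\dim A$ but $\hh(I)\ge 2$ arbitrary. With that citation (or a proof of that statement) in place of \cite[Corollary 3.4]{BR00}, your argument becomes the paper's proof.
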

	
	\begin{proof}
 Note that if $\mu(I/I^2)<d+1$, then by Lemma \ref{MKL} we get $\mu(I)\le d+1$. Hence we can take $\mu(I/I^2)=d+1$. By Lemma \ref{MKL} there exists $e\in I^2$ such that $I=<a_0,\ldots,a_d,e>$, and $e(1-e)\in <a_0,\ldots,a_d>$. Moreover, using Theorem \ref{ee}, we can make the assumption that $\hh(<b_0,\ldots,b_d>_e)\ge d+1$, where $b_i=a_i+e\lambda_i$ for all $i=0,\ldots,d$. Note that $a_i-b_i\in I^2$, for all $i=0,\ldots,d$. Let $J=<b_0,\ldots,b_d,1-e>$. Since $J+<e>=A$ and $\hh(<b_0,\ldots,b_d>_e)\ge d+1$ we get $\hh(J)\ge d+1$. Now if $\hh(J)>d+1$, then the result follows from the fact that $I\cap J=<b_0,\ldots,b_d>$. Therefore, we may assume that $\hh(J)=d+1$.
	
	Observe that $J=<b_0,\ldots,b_d,1-e>$ induces $J=<b_0,\ldots,b_d>+J^2$. Now using Theorem \ref{thm:main}, there exist $c_i\in J$ $(i=0,\ldots,d)$ such that $J=<c_0,\ldots,c_d>$ with $c_i-b_i\in J^2$, for $i=0,\ldots,d$. Applying Subtraction principle \cite[Proposition 2.2]{DaSr10}, we can find $f_i\in I$ such that $I=<f_0,\ldots,f_d>$ with $f_i-b_i\in I^2$, for $i=0,\ldots,d$. This completes the proof.    
	\end{proof}

\begin{remk}
Following the argument given in Remark \ref{non:Noe}, Theorem \ref{imt} is also valid for non-Noetherian geometric subring $A$ of $R[T]$, with an extra hypothesis $\mu(I)<\infty.$
\end{remk}

 \section{Application 1: Set-theoretic generation of ideals in $A$}\label{4}
		
Let $A$ be as in (\ref{defn:imp}).
As an application of our main theorem, in this section, we will deduce some results on the set-theoretic generation of ideals in $A$. 

	\smallskip
	

The proof of the following theorem can be found in \cite[Theorem]{Bo80}.
 
\begin{thm} \label{Kt}
	 Let $R$ be a commutative Noetherian ring of dimension $d$. Let $I \subset R$ be an ideal. 
 Then there exists an ideal $J$ satisfying $\mu(J/J^2
	) \le d$ and $\sqrt{I}=\sqrt{J}$.   
\end{thm}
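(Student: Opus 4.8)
The plan is to pass from this global assertion to a collection of local ones by a Forster--Swan estimate, and then to build the ideal $J$ by combining the classical Storch--Eisenbud--Evans set-theoretic argument with a squaring device.

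First I would reduce to the following: \emph{there is an ideal $J$ with $\sqrt{J}=\sqrt{I}$ such that $\mu(J_{\fp}/J_{\fp}^{2})\le\hh(\fp)$ for every $\fp\in V(J)$}. Granting this, apply the Forster--Swan bound to the finitely generated $R/J$-module $J/J^{2}$ (whose support lies in $V(J)$):
\[
\mu(J/J^{2})\ \le\ \max_{\fp\in V(J)}\big(\mu(J_{\fp}/J_{\fp}^{2})+\dim(R/\fp)\big)\ \le\ \max_{\fp\in V(J)}\big(\hh(\fp)+\dim(R/\fp)\big)\ \le\ \dim(R)=d,
\]
the last inequality because $\hh(\fp)+\dim(R/\fp)\le\dim(R)$ for every prime of a Noetherian ring. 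Thus the whole problem is to produce a $J$ which, locally, is no worse than a complete intersection of the expected height.

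To construct such a $J$ I would run the Storch/Eisenbud--Evans procedure --- the ``general position'' mechanism that also underlies Theorem~\ref{ee} --- and then square the last generator. By repeated prime avoidance, choose $b_{1},\dots,b_{d-1}\in I$ generic enough that, writing $K:=\langle b_{1},\dots,b_{d-1}\rangle$, one has $V(K)=V(I)\cup W$ with $\dim W\le 0$; then choose $b_{d}\in I$ lying in no prime of $W\setminus V(I)$, so that $\sqrt{\langle b_{1},\dots,b_{d}\rangle}=\sqrt{I}$. Put
\[
J\ :=\ K+\langle b_{d}\rangle\cdot\langle b_{1},\dots,b_{d}\rangle\ =\ \langle b_{1},\dots,b_{d-1},\,b_{d}^{\,2}\rangle ,
\]
the last equality since $b_{i}b_{d}\in\langle b_{i}\rangle$ for $i<d$. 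As $b_{d}^{\,2}\in J$ we get $b_{d}\in\sqrt{J}$, hence $\sqrt{J}=\sqrt{\langle b_{1},\dots,b_{d}\rangle}=\sqrt{I}$; so the radical is correct. The point of the square is that it lets $b_{d}$ enter $\sqrt{J}$ without $b_{d}$ itself being forced among the generators contributing to $J/J^{2}$, which is what produces the bound $d$ rather than $d+1$.

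The main obstacle is the remaining local estimate $\mu(J_{\fp}/J_{\fp}^{2})\le\hh(\fp)$ for the primes $\fp\in V(J)=V(I)$: one must show that in each localization $R_{\fp}$ the ideal $J_{\fp}$ is generated by at most $\hh(\fp)$ elements, i.e.\ that the ``excess'' generators --- in particular the square $b_{d}^{\,2}$ --- are absorbed once the $b_{i}$ are in sufficiently general position (the toy case being the absorption $b^{2}=b\cdot b$, together with the fact that a generic system of parameters of $R_{\fp}$ drawn from $I$ generates an ideal containing a power of $\fp R_{\fp}$). Turning ``generic element of $I$'' into a statement that works uniformly at the finitely many bad closed points of $W$, and interleaving it with the prime-avoidance bookkeeping of the basic-element step, is the delicate part; it is carried out purely by local commutative algebra (Nakayama, primary decomposition, reductions of ideals, dimension counting) and is where all the weight of the argument lies. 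I would also note why the genericity is unavoidable: the tempting shortcut of inducting on $\mu(I/I^{2})$ by replacing a pair of generators $a_{n-1},a_{n}$ with $a_{n-1}^{2},a_{n-1}a_{n},a_{n}^{2}$ does \emph{not} in general lower the minimal number of generators of the conormal module --- over non-equidimensional rings it can leave it unchanged --- so the reduction genuinely depends on choosing the $b_{i}$ generically.
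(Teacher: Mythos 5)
You should first note that the paper itself gives no proof of this statement; it is quoted verbatim from Boraty\'nski [Bo80], so there is no in-paper argument to compare against. Your proposal, however, has a genuine gap, and in fact the construction you describe would, if correct, prove something false. The Eisenbud--Evans/Storch iteration with $d-1$ elements only guarantees that the minimal primes of $K=\langle b_1,\dots,b_{d-1}\rangle$ not containing $I$ have height $\ge d-1$, so the bad locus $W$ has $\dim W\le 1$, not $\le 0$; you are off by one, and prime avoidance will not let you pick a single $b_d\in I$ missing the (typically infinitely many) primes of a one-dimensional $W$. Worse, the conclusion you aim for, $\sqrt{\langle b_1,\dots,b_d\rangle}=\sqrt{I}$, is impossible in general: your $J=\langle b_1,\dots,b_{d-1},b_d^2\rangle$ has $d$ generators and the same radical as $I$, which would mean every ideal in a $d$-dimensional Noetherian ring is set-theoretically generated by $d$ elements. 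That fails already for $d=1$: if $R$ is a Dedekind domain and $\fp$ a prime whose class has infinite order (e.g.\ the coordinate ring of an affine complex elliptic curve), then every ideal with radical $\fp$ is a power $\fp^n$, and none of these is principal, even though $\mu(\fp/\fp^2)=1$. The fact that your construction, if it worked, would make the preliminary Forster--Swan reduction and the ``delicate local estimate'' you flag at the end completely unnecessary (a $d$-generated $J$ trivially has $\mu(J/J^2)\le d$) is itself a sign of an internal inconsistency: you correctly sense that a nontrivial local estimate is needed, but the ideal you build would not require one.

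The correct bookkeeping, following Boraty\'nski, uses $d+1$ elements and then a high power rather than a square. Choose $b_1,\dots,b_d\in I$ so that every minimal prime of $K=\langle b_1,\dots,b_d\rangle$ not containing $I$ has height $d$, hence is one of finitely many maximal ideals $\fm_1,\dots,\fm_r$, and then choose $b_{d+1}\in I\setminus(\fm_1\cup\cdots\cup\fm_r)$, so that $\sqrt{\langle b_1,\dots,b_{d+1}\rangle}=\sqrt{I}$. Letting $K'$ be the intersection of those primary components of $K$ whose radical contains $I$, Noetherianity gives $I^N\subset K'$ for $N\gg0$, while for every $\fp\in V(I)$ the $\fm_j$-primary components become trivial and $K_\fp=K'_\fp$. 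Then $J:=K+\langle b_{d+1}^{\,N}\rangle$ satisfies $\sqrt J=\sqrt I$ and $J_\fp=K_\fp$ for all $\fp\in V(J)=V(I)$, so the last generator contributes nothing to $J/J^2$ locally. What remains --- the real content of [Bo80], and precisely the step you identify as ``delicate'' but do not supply --- is to run the basic-element calculus of Eisenbud--Evans so that in addition $\mu(K_\fp)\le\hh(\fp)$ for all $\fp\in V(I)$; the crude bound $\mu(K_\fp)\le d$ from the number of global generators does not feed your Forster--Swan inequality once $\dim(R/I)>0$.
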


We are now ready to prove our main result in this section.

\begin{thm}\label{si}
Let $A$ be a ring as in (\ref{defn:imp}) but need not be Noetherian.
Let $I\subset A$ be an ideal such that $\hh(I)\ge 2$ and $\mu(I)<\infty$. Then $I$ is set-theoretically generated by at most $d+1$ elements.
\end{thm}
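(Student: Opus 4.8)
The plan is to reduce the set-theoretic generation statement to the efficient generation theorem (Theorem~\ref{imt}), using the Boratyński-type reduction (Theorem~\ref{Kt}) to first cut the number of generators of $I/I^2$ down to $d+1$, after which the radical is controlled by a complete intersection. Concretely, first I would reduce to the Noetherian case exactly as in Remark~\ref{non:Noe}: write $I=\langle f_1,\dots,f_k\rangle$ and replace $A$ by a finitely generated $R$-subalgebra $B=R[f_1,\dots,f_k,g_1,g_2]$ containing $I$ with $B_s=A_s=R_s[T,f^n]$, so that $B$ is again a (Noetherian) geometric subring of $R[T,f^n]$, $I\subset B$, $\hh(I)\ge 2$ in $B$, and $\sqrt{I}$ in $A$ equals the radical-extension of $\sqrt{I}$ in $B$; it therefore suffices to prove the theorem for the Noetherian ring $B$, which I will now rename $A$.

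Next, with $A$ Noetherian of dimension $d+1$, I would apply Theorem~\ref{Kt} to the ideal $I$: there is an ideal $J\subset A$ with $\mu(J/J^2)\le d+1$ and $\sqrt{J}=\sqrt{I}$. (Note Theorem~\ref{Kt} produces $\mu(J/J^2)\le \dim A=d+1$; this is exactly the bound allowed in Theorem~\ref{imt}.) The height hypothesis also passes to $J$: since $\sqrt J=\sqrt I$ we have $V(J)=V(I)$, so $\hh(J)=\hh(I)\ge 2$. Now apply Theorem~\ref{imt} to $J$ (whose hypotheses $\hh(J)\ge 2$ and $\mu(J/J^2)\le d+1$ are met): we conclude $\mu(J)\le d+1$, say $J=\langle h_0,\dots,h_d\rangle$. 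Then
\[
\sqrt{I}=\sqrt{J}=\sqrt{\langle h_0,\dots,h_d\rangle},
\]
which is precisely the assertion that $I$ is set-theoretically generated by at most $d+1$ elements.

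The main obstacle is making sure the reduction to the Noetherian case is clean, in particular that the ideal $J$ produced by Theorem~\ref{Kt} in the Noetherian ring $B$ has a radical that still equals $\sqrt I$ back in $A$ — but this is immediate because radicals commute with the flat base change $B\to B_s=A_s$ only when inverting $s$, and more simply because $\sqrt{I}^{A}=\sqrt{I}^{B}\cdot A$ is false in general; the correct and sufficient observation is that we only ever need $\sqrt{\langle h_0,\dots,h_d\rangle}=\sqrt I$ \emph{as ideals of $A$}, and since $h_i\in J\subset B\subset A$ and $\sqrt I^B\subset \sqrt{\langle h_0,\dots,h_d\rangle}^B$ with the reverse inclusion from $J\subset\sqrt I$, the equality of radicals holds in $B$ hence, applying the radical operator in the overring $A$ to the same generators, also in $A$. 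One small point to check is that $B$ has dimension exactly $d+1$ (not less), which follows since $B_s=A_s$ and, as in the proof of Theorem~\ref{thm:main} Step~1, either $\dim B_s=d+1$ or the theorem's conclusion is even easier; in the borderline case $\dim B<d+1$ one simply invokes Theorem~\ref{imt} with the weaker dimension, or the general bound of Mohan Kumar, to get $\mu(J)\le \dim B+1\le d+1$. Either way the count of $d+1$ generators for the radical is achieved.
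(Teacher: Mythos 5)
Your proposal is correct and follows essentially the same route as the paper: reduce to the Noetherian case via Remark~\ref{non:Noe}, apply Theorem~\ref{Kt} to replace $I$ by an ideal $J$ with $\sqrt{J}=\sqrt{I}$ and $\mu(J/J^2)\le d+1$, note $\hh(J)=\hh(I)\ge 2$, and conclude $\mu(J)\le d+1$ by Theorem~\ref{imt}. Your extra remarks on transferring the radical equality back to $A$ and on the dimension of the Noetherian subring are sensible elaborations of the same argument, not a different method.
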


\begin{proof}
  Observe that following the arguments given in Remark \ref{non:Noe}, it is enough to assume that $A$ is a Noetherian ring. It follows from Theorem \ref{Kt} that there exists an ideal  $J\subset A$ such that $\mu(J/J^2)\le d+1$ and $\sqrt{I}=\sqrt{J}$. Therefore it is enough to show that $\mu(J)\le d+1$. The remaining part of the proof is devoted to showing this.

Note that if $\mu(J/J^2)\le d$ then by Lemma \ref{MKL} we get $\mu(J)\le d+1$. 
So we can assume that $\mu(J/J^2)=d+1$.
Since $\sqrt{J}=\sqrt{I}$, we have $\hh(J)=\hh(I)\ge 2$. Hence we can use Theorem \ref{imt} to achieve that $\mu(J)\le d+1$. This completes the proof. 
\end{proof}

\section{Application 2: Set-theoretic generation of ideals in $A[X]$}\label{5}
		
 Let $A$ be as in (\ref{defn:imp}).
 As an application of our main theorem, in this section, we will deduce some results on the set-theoretic generation of ideals in $A[X]$.

\begin{thm}
\label{pe}
Let $R$ be a commutative Noetherian ring of dimension $d\ge 1$ which satisfies one of the following conditions:

\begin{enumerate}
	\item $\mathbb{Q}\subset R$;
	\item $R$ is an affine algebra over $\overline{\mathbb{F}}_p$.
\end{enumerate}
	Let $A$ be as in (\ref{defn:imp}) but need not be Noetherian, where $R$ in (\ref{defn:imp}) is one of the above two.
 Let $I\subset A[X]$ be an ideal of height $\ge 3$ such that $\mu(I)<\infty$. Moreover, assume that $I=<f_1,\ldots,f_{d+1}>+I^2$. Then there exists $g_i\in I$ such that $I=<g_1,\ldots,g_{d+1}>$ with $f_i-g_i\in I^2$ for all $i=1,\ldots,d+1$.  
\end{thm}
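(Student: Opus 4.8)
The plan is to reduce the statement over $A[X]$ to the efficient generation result for a geometric subring, namely Theorem~\ref{imt}. First I would observe that $A[X]$ is itself (essentially) a geometric subring: since $A_s=R_s[T,f^n]$ with $s\in R$ a non-zero divisor, we get $(A[X])_s=R_s[X][T,f^n]=R'_s[T,f^n]$ where $R'=R[X]$, and $R'=R[X]$ is a Noetherian ring of dimension $d+1$ (and $s$ remains a non-zero divisor in $R[X]$). Thus $A[X]$ is a geometric subring of $R'[T,f^n]$ over the base ring $R'$ of dimension $d'=d+1$, so Theorem~\ref{imt} applies to ideals of $A[X]$ of height $\ge 2$ with $\mu(I/I^2)\le d'+1=d+2$. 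But here $I$ has height $\ge 3$ and $\mu(I/I^2)\le d+1<d+2$, so on its face Theorem~\ref{imt} gives a lift to $d+2$ generators, which is one too many; the extra hypotheses (1)/(2) on $R$ must be used precisely to shave off this one generator and land at $d+1$.

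Second, I would handle this gap by working locally and exploiting the classical efficient-generation/cancellation results available over $R[X]$ when $R$ contains $\Q$ or is affine over $\ov{\F}_p$. Concretely, run the machinery of the proof of Theorem~\ref{imt}: by Lemma~\ref{MKL} pick $e\in I^2$ with $I=\langle f_1,\dots,f_{d+1},e\rangle$ and $e(1-e)\in\langle f_1,\dots,f_{d+1}\rangle$; then by Theorem~\ref{ee} adjust $f_i\mapsto b_i=f_i+e\lambda_i$ so that $\hh(\langle b_1,\dots,b_{d+1}\rangle_e)\ge d+2$ with $b_i-f_i\in I^2$. Setting $J=\langle b_1,\dots,b_{d+1},1-e\rangle$ we get $J+\langle e\rangle=A[X]$ and $\hh(J)\ge d+2$ (reducing to the case $\hh(J)=d+2$, the other case being immediate from $I\cap J=\langle b_1,\dots,b_{d+1}\rangle$). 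The crucial point is that $J=\langle b_1,\dots,b_{d+1}\rangle+J^2$ with $\hh(J)=d+2=\dim(R[X])+1$ and $\mu(J/J^2)=d+1$, and one needs: in this geometric subring $A[X]$ of $R'[T,f^n]$, any ideal $J$ with $\hh(J)=\dim(A[X])$ and $\mu(J/J^2)=\dim(A[X])-1$ is actually generated by $\dim(A[X])-1$ elements lifting a given set of generators of $J/J^2$. This is where the hypotheses on $R$ enter, since over $R[X]$ with $\Q\subset R$ or $R$ affine over $\ov{\F}_p$ one has the strong results (due to, e.g., Bhatwadekar--Sridharan type arguments, or the $W$-equivalence/Euler class techniques, together with the fact that $\dim R[X]=d+1$ and the relevant $E^{d+1}$ vanishing over such $R[X]$) that make this possible; one re-runs the proof of Theorem~\ref{thm:main} but with the ``$-1$'' improvement available over these nicer base rings, using Theorem~\ref{tcecg} and Lemma~\ref{brcl} adapted to $A[X]$.

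Third, once $J$ is efficiently generated — i.e.\ there are $c_i\in J$ with $J=\langle c_1,\dots,c_{d+1}\rangle$ and $c_i-b_i\in J^2$ — I would invoke the Subtraction Principle \cite[Proposition 2.2]{DaSr10} exactly as in the proof of Theorem~\ref{imt}: since $I+J=A[X]$, $I\cap J=\langle b_1,\dots,b_{d+1}\rangle$ (after the adjustment), and $J$ is a complete intersection on the $c_i$'s, one subtracts to produce $g_i\in I$ with $I=\langle g_1,\dots,g_{d+1}\rangle$ and $g_i-b_i\in I^2$, hence $g_i-f_i\in I^2$ since $b_i-f_i\in I^2$. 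Finally, the non-Noetherian case is dispatched by the standard descent of Remark~\ref{non:Noe}: since $\mu(I)<\infty$, replace $A$ by a finitely generated $R$-subalgebra $B$ containing generators of $I$ and the elements witnessing $B_s=A_s$, so $B[X]$ is a Noetherian geometric subring and the result there implies the result over $A[X]$.

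The main obstacle I expect is the middle step: Theorem~\ref{imt} as stated only yields a lift to $\dim(A)+1$ generators, so I cannot simply quote it. The real content is proving the sharper statement that over the geometric subring $A[X]$ of $R'[T,f^n]$ — with $R'=R[X]$ satisfying $\Q\subset R'$ or $R'$ affine over $\ov{\F}_p$ (inherited from $R$) — an ideal of height $\dim(A[X])$ with $\mu(J/J^2)=\dim(A[X])-1$ is generated by that many elements. This requires re-examining whether the Euler-class vanishing inputs (the analogue of \cite[Theorem 3.2]{DZ15} used in Case~1 of Theorem~\ref{thm:main}, and \cite[Theorem 5.1]{Rao82}, \cite[Lemma 1]{Quillen76} in Step~2) hold one dimension lower over these special base rings; the hypotheses (1) and (2) are exactly the classical contexts (Mohan Kumar, Bhatwadekar--Roy, and related work) where the extra cancellation needed for the ``$-1$'' is known, so the proof amounts to threading those results through the fiber-product/patching argument already used for Theorem~\ref{thm:main}.
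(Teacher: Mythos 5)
Your proposal has a genuine gap, and it is exactly at the step you flag as "the main obstacle." Viewing $A[X]$ as a geometric subring of $R'[T,f^n]$ with $R'=R[X]$ of dimension $d+1$ is correct but unhelpful: it puts $\dim(A[X])=d+2$, so Theorem~\ref{imt} only gives $d+2$ generators, and the entire content of Theorem~\ref{pe} becomes the unproven assertion that one can do one better. You propose to "re-run" the machinery of Theorem~\ref{thm:main} "one dimension lower over these special base rings," but you never say how; and this is far from routine, because the Euler class arguments in Theorem~\ref{thm:main} (in particular the appeal to \cite[Theorem 3.2]{DZ15} in Case~1 and the patching with Rao's and Quillen's theorems in Step~2) are calibrated to the regime $\hh(I)=\dim$, not $\hh(I)=\dim-1$. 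There is also a concrete slip in your middle step: with $d+1$ elements $b_1,\dots,b_{d+1}$ in $A[X]$, Theorem~\ref{ee} applied to the free module $(A[X])^{d+1}$ gives $\hh(\langle b_1,\dots,b_{d+1}\rangle_e)\ge d+1$, not $\ge d+2$; indeed $\ge d+2$ is impossible by Krull's height theorem since the ideal has $d+1$ generators. So the setup with $J=\langle b_1,\dots,b_{d+1},1-e\rangle$ and $\hh(J)\ge d+2$ does not get off the ground.

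The paper's route is genuinely different and avoids the difficulty entirely: it never attempts to strengthen Theorem~\ref{imt} over $A[X]$. Instead it first normalizes $I$ at $X=0$ (using \cite[Lemma 3.3, Remark 3.9]{BR98} when $\Q\subset R$, and Theorem~\ref{imt} applied to $A$ itself for the fiber $I(0)$) to arrange $I=\langle F_1,\dots,F_{d+1}\rangle+I^2X$, and then invokes the descent result \cite[Theorem 4.2]{DaSr10} (resp.\ \cite[Theorem 9.1]{SBMKD2} in Case~2) to reduce the lifting problem over $A[X]$ to the ring $A(X)$ obtained by inverting monic polynomials. The crucial point you miss is the dimension drop: with $S$ the monic polynomials in $R[X]$, $\dim(S^{-1}R[X])=\dim R=d$, so $S^{-1}A[X]$ is a geometric subring of $(S^{-1}R[X])[T,f^n]$ over a base of dimension $d$ and hence has dimension $d+1$. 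At that point Theorem~\ref{imt} applies verbatim to $S^{-1}I$ and gives $d+1$ generators. Your first and third steps (the reduction to a finitely generated $R$-subalgebra via Remark~\ref{non:Noe}, and the Subtraction Principle) are consistent with the paper, but the pivotal middle step needs the $A(X)$ trick, not a hoped-for sharpening of Theorem~\ref{imt}.
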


\begin{proof}
We divide the proof into the following two cases.

\textbf{Case 1.} In this case we assume that $\mathbb{Q}\subset A$.  Observe that following the arguments given in Remark \ref{non:Noe}, without loss of generality we may assume that $A$ is a Noetherian ring containing $\mathbb{Q}$. Since $\mathbb{Q}\subset A$, using \cite[Lemma 3.3]{BR98}, there exists $\lambda\in \mathbb{Q}$ such that $I(\lambda)=A$ or $\hh(I(\lambda))\ge 3$. Moreover, via the automorphism $T\to T-\lambda$, we may assume that either $I(0)=A$ or $\hh(I(0))\ge 3$. Note that if $\hh(I(0))\ge 3$, then by Theorem \ref{imt}, there exists $b_i\in I(0)$ such that $I(0)=<b_1,\ldots,b_{d+1}>$, where $f_i(0)-b_i\in I(0)^2$ for $i=1,\ldots,d+1$. Using \cite[Remark 3.9]{BR98} there exists $F_i\in I$ such that $I=<F_1,\ldots,F_{d+1}>+I^2X$, where $F_i-f_i\in I^2$ and $F_i(0)=b_i$ for $i=1,\ldots, d+1$. Observe that, to prove the theorem in this case, it is enough to find a lift of generators of $I=<F_1,\ldots,F_{d+1}>+I^2X$. The remaining part of the proof, in this case, is devoted to showing this.

Let $S$ be the multiplicatively closed set consisting of all monic polynomials in $R[X]$. Let $B=S^{-1}R[X]$. Let $A(X)$ be the ring inverting all monic polynomials in $A[X]$. In view of \cite[Theorem 4.2]{DaSr10}, it is enough to show that the set of generators $IA(X)=<F_1,\ldots,F_{d+1}>A(X)+I^2A(X)$ can be lifted to a set of generators of $IA(X)$. Moreover, since $A(X)$ is a localization of $S^{-1}(A[X])$, it is enough to lift $S^{-1}I=S^{-1}<F_1,\ldots,F_{d+1}>+S^{-1}(I^2)$ to a set of generators of $S^{-1}I$. Observe that we have the following:
\begin{enumerate}
	\item $\dim(B)=\dim(R)=d$;
	\item $B\subset S^{-1}A[X]\subset B[T,f^n]$;
	\item $(S^{-1}A[X])_s=S^{-1}(A_s[X])=B_s[T,f^n]$;
	\item $S^{-1}I\subset S^{-1}A[X]$ is an ideal such that $\hh(S^{-1}I)\ge 3$.
\end{enumerate}

In particular, $S^{-1}A[X]$ is a geometric subring of $ B[T,f^n]$. Therefore, applying Theorem \ref{imt} we can lift $S^{-1}I=S^{-1}<F_1,\ldots,F_{d+1}>+S^{-1}I^2$ to a set of generators of $S^{-1}I$. This completes the proof in Case 1.

\textbf{Case 2.}  In this case we assume that $R$ is an affine algebra over $\overline{\mathbb{F}}_p$. Again as before, following the arguments given in Remark \ref{non:Noe}, without loss of generality we may assume that $A$ is a finitely generated $R-$algebra. In particular, this will imply $A$ is an affine $\overline{\mathbb{F}}_p-$algebra. Therefore, to prove the theorem, in view of \cite[Theorem 9.1]{SBMKD2} it is enough to find a lift of $IA(X)=<f_1,...,f_{d+1}>A(X)+I^2A(X)$ to a set of generators of $IA(X)$. This can be achieved following the same arguments given in Case  1.
\end{proof}
 
 As customary to the Euler class theoretic literature, we get the following result as a corollary to the previous theorem.
 \begin{cor}
 \label{pe1}
 Let $R$ and $A$ be as in Theorem \ref{pe}. Moreover, assume that $A$ is finitely generated $R$-algebra. Let $P$ be a projective $A[X]$-module of rank $d+1$ with a trivial determinant. Then $P$ has a unimodular element.
\end{cor}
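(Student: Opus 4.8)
The plan is to deduce this from Theorem \ref{pe} by the standard Euler-class-theoretic passage from "every local complete intersection ideal lifts its generators" to "every projective module of top rank splits off a free summand." First I would set $B = A[X]$, which has dimension $d+2$; since $P$ has rank $d+1 = \dim(B) - 1$, I cannot apply Serre directly, so the extra structure of $B$ coming from Theorem \ref{pe} is essential. The idea is to realize $P$ as being governed by an Euler cycle $(I, \omega_I) \in E^{d+1}(B, \wedge^{d+1}P)$, exactly as in the proof of Theorem \ref{eue}: choose a general surjection $\lambda: P \surj I$ onto an ideal $I \subset B$ of height $d+1$ (this is possible by Theorem \ref{ee} applied to a generic $\lambda$), and since $\dim(B/I) = 1$ we have $P/IP \cong (L/IL) \oplus (B/I)^d$ where $L = \wedge^{d+1}P$; composing with $\lambda \otimes B/I$ gives a surjection $\omega_I : (L/IL) \oplus (B/I)^d \surj I/I^2$.

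Next, since $P$ is assumed to have trivial determinant, $L = \wedge^{d+1}P \cong B = A[X]$, so $\omega_I$ has the form $(B/I)^{d+1} \surj I/I^2$; that is, $I = \langle f_1, \ldots, f_{d+1}\rangle + I^2$ for some $f_i \in I$. Now I would invoke Theorem \ref{pe}: provided $\hh(I) \ge 3$, it produces $g_i \in I$ with $I = \langle g_1, \ldots, g_{d+1}\rangle$ and $g_i - f_i \in I^2$, i.e.\ a surjective lift $\beta : B^{d+1} \surj I$ with $\beta \otimes B/I = \omega_I$. With the surjective lift $\beta$ in hand, the Subtraction Principle \cite[Corollary 3.4]{BR00} (used exactly as in the proof of Theorem \ref{eue}) yields that $P$ has a unimodular element. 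There is a minor bookkeeping point about the height hypothesis: $E^{d+1}$-theory needs $\hh(I) = d+1$, whereas Theorem \ref{pe} needs $\hh(I) \ge 3$; when $d \ge 2$ this is automatic, and when $d = 1$ one arranges $\hh(I) \ge 3 = d+2$ by a moving-lemma argument, in which case $I = B$ locally off a set of height $\ge 3$ and the splitting is even easier — alternatively one simply notes $\hh(I) = d+1$ and, since $d+1 \ge 3$ is forced once $d \ge 2$ while the $d=1$ case can be treated by choosing $I$ of height $\ge 3$ directly via Theorem \ref{ee} with the rank-$(d+1)$ bound.

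The main obstacle — and the reason we need $A$ finitely generated over $R$ and the special hypotheses on $R$ — is verifying that Theorem \ref{pe} genuinely applies to the ideal $I$ produced by the Euler cycle: one must ensure $\mu(I) < \infty$ (automatic since $B$ is Noetherian here) and, crucially, that the height constraint $\hh(I) \ge 3$ can be met by the generic choice of $\lambda$ together with Theorem \ref{ee}, which only guarantees height $\ge d+1$. So the real content is: first use Theorem \ref{ee} and a moving-lemma reduction to get the governing ideal to have height $\ge \max(d+1, 3)$, then feed the resulting equation $I = \langle f_1, \ldots, f_{d+1}\rangle + I^2$ into Theorem \ref{pe}, and finally run the Subtraction Principle as in Theorem \ref{eue}. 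Once the height is arranged, the rest is a routine transcription of the argument already given for Theorem \ref{eue}.
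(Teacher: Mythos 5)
Your overall plan is in the right spirit---reduce to Theorem \ref{pe} and then invoke Euler class theory---but the load-bearing final step does not work as stated. You propose to apply the subtraction principle \cite[Corollary 3.4]{BR00} ``exactly as in the proof of Theorem \ref{eue}.'' That result, and the Euler-cycle machinery of \cite{BR00} recalled in Section \ref{sec:Prelim}, is a \emph{top-dimensional} theory: it requires $\rank(P)=\dim(\text{ambient ring})=\hh(I)$, so that the quotient by $I$ is zero-dimensional and semilocal. In Theorem \ref{eue} these equalities hold, since $\dim(A)=\rank(P)=\hh(I)=d+1$. In Corollary \ref{pe1} they do not: the ambient ring is $A[X]$ of dimension $d+2$, whereas $\rank(P)=d+1$ and the governing ideal $I$ has height $d+1$, so $\dim(A[X]/I)$ may equal $1$. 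You notice this yourself when you write ``since $\dim(B/I)=1$''---and that is precisely the point at which \cite[Corollary 3.4]{BR00} is no longer available. The object $E^{d+1}(A[X])$ is \emph{not} the top Euler class group of $A[X]$ (that would be $E^{d+2}$); it is the Euler class group of a polynomial algebra in the sense of Das, with its own definition of Euler cycle, its own well-definedness theorem, and its own subtraction principle. The step from a surjective lift of $\omega_I$ to a unimodular element of $P$ is therefore not a ``routine transcription'' of Theorem \ref{eue}.

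The paper's proof is short because it routes through exactly the right machinery: from Theorem \ref{pe} it deduces $E^{d+1}(A[X])=0$ (the ideals occurring in this group have height $d+1$, and Theorem \ref{pe} covers them once $d+1\geq 3$), and then it cites \cite[Corollary 4.11]{Das03} for the case $\Q\subset R$ and \cite[Theorem 4.11]{SBMKD2} for $R$ affine over $\overline{\mathbb{F}}_p$. These two references supply, in the polynomial-extension setting, precisely what you were trying to import from \cite{BR00}: the construction and well-definedness of the Euler class of a rank-$(d+1)$ projective $A[X]$-module with trivial determinant, and the implication ``Euler class zero $\Rightarrow$ unimodular element.'' Without invoking them, the passage from ``generators of $I/I^2$ lift'' to ``$P$ splits off a free summand'' has no foundation.
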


 \begin{proof}
 From Theorem \ref{pe} we get $E^{d+1}(A[X])=0$. Therefore, the result follows from \cite[Corollary 4.11]{Das03} and \cite[Theorem 4.11]{SBMKD2}.
 \end{proof}

 \begin{remk}
   Whenever $\mathbb{Q}\subset R$, altering the proof of Theorem \ref{pe} one can establish the fact that $E^{d+1}(A[X],L)$ is trivial, where $L$ is a projective $A[X]-$module of rank one. Therefore, using \cite{DZ13}, one can remove the hypothesis that $P$ has a trivial determinant from Corollary \ref{pe1}. 
 \end{remk} 
 
 We now move towards proving a similar result to Theorem \ref{si}, in the polynomial extensions. As a preparation, we need the following theorem due to Ferrand-Szpiro \cite{F-S} (or see \cite[Theorem 6.1.3, page no 75]{Ma_Book_97}).
 
 \begin{thm}
 \label{fs}
Let $R$ be a commutative Noetherian ring. Let $I$ be a locally complete intersection ideal of $R$ of height $n \ge 2$ with $\dim(R/I) \le 1$. Then there is a locally complete intersection ideal $J \subset R$ of height $n$ such that 
\begin{enumerate}
	\item $\sqrt{I}=\sqrt{J}$;
	\item $J/J^2$ is a free $R/J-$module of rank $n$.
\end{enumerate}     
 \end{thm}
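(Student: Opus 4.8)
The plan is to prove Theorem~\ref{fs} (the Ferrand--Szpiro theorem) by realizing $J$ as a suitable modification of $I$ inside a finite list of primes, following the classical construction. First I would reduce to the affine/local situation: since $\dim(R/I) \le 1$, the set of primes minimal over $I$, together with the finitely many maximal ideals of $R/I$, is a finite set; call its union $W = V(I)$. The key point is that a height-$n$ locally complete intersection ideal is, locally at every prime of $V(I)$, generated by a regular sequence of length $n$, so $I/I^2$ is locally free of rank $n$ over $R/I$. Since $\dim(R/I)\le 1$, a rank-$n$ projective $R/I$-module is the direct sum of a line bundle and a free module of rank $n-1$ (Serre/Bass), so $I/I^2 \cong \mathcal L \oplus (R/I)^{n-1}$ for some invertible $R/I$-module $\mathcal L$.

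The heart of the argument is to trade $I$ for an ideal $J \subset I$ with $\sqrt J = \sqrt I$ whose conormal module becomes free. The standard device: choose an invertible ideal $\mathfrak a \subset R/I$ representing $\mathcal L^{-1}$ and coprime to all the associated primes one needs to control; lift generators appropriately and set $J$ to be the preimage of an ideal of $R$ that agrees with $I$ away from $V(\mathfrak a)$-type loci, arranged so that $J/J^2 \cong \mathcal L \otimes_{R/I} \mathfrak a \oplus (R/J)^{n-1}$, which is free of rank $n$. One must check (i) $J$ is still a local complete intersection of height $n$ — this is automatic because locally $J$ and $I$ coincide, or $J$ is generated by a regular sequence obtained by perturbing that of $I$; (ii) $\sqrt J = \sqrt I$ — because the modification only happens within $V(I)$ and does not introduce new components nor drop any; (iii) the conormal bundle is free — this is exactly how $\mathfrak a$ was chosen. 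The one-dimensionality of $R/I$ is used twice: once to split off the free rank $n-1$ summand from $I/I^2$, and once to guarantee that every invertible ideal class on $\operatorname{Spec}(R/I)$ is represented by an actual ideal avoiding any prescribed finite set of primes (a Noether-normalization/prime-avoidance style statement valid in dimension $\le 1$).

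The main obstacle will be the bookkeeping in step (iii): producing the ideal $\mathfrak a$ so that simultaneously it represents $\mathcal L^{-1}$ in $\operatorname{Pic}(R/I)$, it is comaximal with enough primes that the perturbation of the local regular sequences glues to a global ideal $J$, and the resulting $J$ genuinely has $J/J^2$ free rather than just stably free. In dimension $\le 1$ stably free of rank $n \ge 2$ over $R/J$ need not be free in general, so one must arrange the line-bundle twist to land exactly on the nose; this is where the hypothesis $\dim(R/I)\le 1$ is doing real work, since then $\operatorname{Pic}$ controls the obstruction completely and the cancellation is governed by a $K_0$ computation that collapses. I would handle this by citing the determinant trick: $J/J^2$ has rank $n$ and $\wedge^n(J/J^2) \cong \wedge^n(I/I^2) \otimes \mathfrak a \cong \mathcal L \otimes \mathfrak a \cong R/J$, and a rank-$n$ projective module over a one-dimensional ring with trivial determinant is free (Serre's splitting plus Bass cancellation). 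Everything else is routine gluing along the finite set $V(I)$, and the height and radical conditions are preserved by construction.
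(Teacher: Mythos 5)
The paper itself offers no proof of this statement---it is quoted from Ferrand--Szpiro \cite{F-S} (see also \cite[Theorem 6.1.3]{Ma_Book_97})---so your proposal has to stand on its own, and it has a genuine gap at its core: the ideal $J$ is never actually constructed. The step ``lift generators appropriately and set $J$ to be the preimage of an ideal of $R$ that agrees with $I$ away from $V(\mathfrak{a})$-type loci, arranged so that $J/J^2\cong \mathcal{L}\otimes\mathfrak{a}\oplus(R/J)^{n-1}$'' assumes exactly what the theorem asserts: producing an ideal with a prescribed (twisted) conormal module is the entire content of Ferrand's construction, not routine gluing. The construction that works is concrete: since $I/I^2$ is projective of rank $n\ge 2>\dim(R/I)$, there is a surjection $\alpha\colon I/I^2\surj \mathcal{L}^{-1}$, where $\mathcal{L}=\wedge^n(I/I^2)$; one sets $J=\ker\bigl(I\to I/I^2\xrightarrow{\ \alpha\ }\mathcal{L}^{-1}\bigr)$, so that $I^2\subseteq J\subseteq I$ and $\sqrt{J}=\sqrt{I}$ comes for free. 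Locally at a prime $\fp\supseteq I$ one may choose a regular sequence $a_1,\dots,a_n$ generating $I_\fp$ with $\alpha$ vanishing on $a_2,\dots,a_n$; then $J_\fp=(a_1^2,a_2,\dots,a_n)$, which is what proves that $J$ is a local complete intersection of height $n$, and one gets an exact sequence identifying $J/IJ\cong\ker\alpha\oplus(I/J)^{\otimes 2}$, so that $\det\bigl(J/J^2\otimes R/I\bigr)\cong\mathcal{L}^{\otimes 2}\otimes\mathcal{L}^{\otimes -2}$ is trivial; Serre splitting over the at most one-dimensional ring $R/I$ and lifting a basis across the nilpotent kernel of $R/J\surj R/I$ then give $J/J^2$ free over $R/J$. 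None of these verifications appears in your sketch, and your item (i), that ``locally $J$ and $I$ coincide,'' is in fact false for the construction that works: at every prime of $V(I)$ the generator $a_1$ is replaced by $a_1^2$.

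Two further inaccuracies are worth flagging. First, over a Noetherian ring of dimension at most $1$ a stably free module of rank $n\ge 2$ \emph{is} free (Bass cancellation, rank exceeding the dimension), so the difficulty you raise about stably free versus free is vacuous; it is harmless only because you then fall back on the determinant. Second, the determinant identity you do rely on, $\wedge^n(J/J^2)\cong\wedge^n(I/I^2)\otimes\mathfrak{a}$, is precisely what the omitted local computation is needed to establish---with $J$ left unspecified there is nothing to compute it from, and the correct bookkeeping (the twist enters through $(I/J)^{\otimes 2}$ and through $\det(\ker\alpha)$) is what forces the choice $M=\mathcal{L}^{-1}$ for the line bundle one surjects onto. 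So the overall strategy (kill the determinant of the conormal, then use $\dim\le 1$) is the right one, but as written the argument begs the question at the decisive step.
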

 
 \begin{thm}
  Let $R$ and $A$ be as in Theorem \ref{pe}. Let $I\subset A[X] $ be a local complete intersection ideal of height $\ge 3$ such that $\dim(A[X]/I)\le 1$ and $\mu(I)<\infty$. Then $I$ is  set theoretically generated by $d +1$ or lesser elements.
 \end{thm}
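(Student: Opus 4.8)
The plan is to reduce, via the Ferrand--Szpiro theorem, to an ideal whose conormal module is free, and then invoke the efficient-generation theorem for geometric subring polynomial extensions (Theorem \ref{pe}). First I would dispose of the non-Noetherian case exactly as in Remark \ref{non:Noe}: as $\mu(I)<\infty$, replace $A$ by a finitely generated $R$-subalgebra $B\subset A$ that contains a finite generating set of $I$ together with the elements witnessing $T$ and $f^{n}$ after inverting $s$; then $B$ is a Noetherian geometric subring of $R[T,f^{n}]$, $I\subset B[X]$, and a set of set-theoretic generators of $I$ in $B[X]$ also serves in $A[X]$. So assume henceforth that $A$, hence $A[X]$, is Noetherian and $I$ is finitely generated.

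Next, apply Theorem \ref{fs} to the local complete intersection ideal $I\subset A[X]$: since $n:=\hh(I)\ge 3\ge 2$ and $\dim(A[X]/I)\le 1$, it produces a local complete intersection ideal $J\subset A[X]$ with $\hh(J)=n$, $\sqrt{J}=\sqrt{I}$, and $J/J^{2}$ a free $A[X]/J$-module of rank $n$. Because $\sqrt{J}=\sqrt{I}$, any generating set of $J$ is a set of set-theoretic generators of $I$, so it suffices to show $\mu(J)\le d+1$; also $\dim(A[X]/J)=\dim(A[X]/I)\le 1$. Since $J/J^{2}$ is free of rank $n$ and $J$ is locally a complete intersection, every minimal prime $\mathfrak q$ of $J$ has height $n$; comparing this with $\hh(\mathfrak q)+\dim(A[X]/\mathfrak q)\le\dim(A[X])=d+2$ and with $\dim(A[X]/J)\le 1$ forces $n\le d+1$. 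A basis $\bar a_{1},\dots,\bar a_{n}$ of $J/J^{2}$ then lifts to $a_{1},\dots,a_{n}\in J$ with $J=\langle a_{1},\dots,a_{n}\rangle+J^{2}$, and adjoining $d+1-n$ zeros gives $J=\langle a_{1},\dots,a_{n},0,\dots,0\rangle+J^{2}$ with $d+1$ elements on the right.

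Finally, $J\subset A[X]$ satisfies the hypotheses of Theorem \ref{pe}: $\hh(J)=n\ge 3$, $\mu(J)<\infty$, and $J$ is generated modulo $J^{2}$ by $d+1$ elements. Theorem \ref{pe} then yields $g_{1},\dots,g_{d+1}\in J$ with $J=\langle g_{1},\dots,g_{d+1}\rangle$, whence $\sqrt{I}=\sqrt{J}=\sqrt{\langle g_{1},\dots,g_{d+1}\rangle}$, which is the assertion.

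The heart of the argument, and the step I expect to need the most care, is the middle paragraph: verifying the hypotheses of Ferrand--Szpiro and, above all, extracting the bound $\hh(I)\le d+1$ from $\dim(A[X]/I)\le 1$, since it is precisely this bound that makes the $d+1$ generators of $J$ modulo $J^{2}$ --- and hence Theorem \ref{pe} --- available. Once $J$ is produced with free conormal module and $\hh(J)\le d+1$, passing to an honest set of $d+1$ generators is immediate from the efficient-generation theorem, and the reduction to the Noetherian case is the routine device of Remark \ref{non:Noe}.
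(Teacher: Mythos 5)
Your proposal follows the paper's proof almost verbatim in its skeleton: reduce to the Noetherian case via Remark \ref{non:Noe}, apply the Ferrand--Szpiro theorem (Theorem \ref{fs}) to replace $I$ by an ideal $J$ with the same radical and free conormal module, and then lift $d+1$ generators of $J$ modulo $J^2$ using Theorem \ref{pe}. The only cosmetic difference is that for $\hh(J)\le d$ the paper quotes Lemma \ref{MKL} directly, while you pad a basis of $J/J^2$ with zeros and invoke Theorem \ref{pe} uniformly; both are fine.

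The one place where you go beyond the paper is the claim that $\dim(A[X]/J)\le 1$ \emph{forces} $n=\hh(I)\le d+1$, and as written that inference is wrong. From a minimal prime $\mathfrak{q}$ of $J$ with $\hh(\mathfrak{q})=n$ you only get $n\le d+2-\dim(A[X]/\mathfrak{q})$, and the hypothesis $\dim(A[X]/J)\le 1$ is an \emph{upper} bound on $\dim(A[X]/\mathfrak{q})$; to conclude $n\le d+1$ you would need the lower bound $\dim(A[X]/\mathfrak{q})\ge 1$, which is not assumed. Your argument does work when $\dim(A[X]/I)=1$, but the theorem's hypotheses also allow $\dim(A[X]/I)=0$ with $\hh(I)=d+2$ (for instance a maximal ideal of height $d+2$ at which $A[X]$ is regular: it is l.c.i., has height $\ge 3$ for $d\ge 1$, and is finitely generated). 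In that case $J/J^2$ is free of rank $d+2$, Theorem \ref{pe} is not applicable with $d+1$ generators, and the whole strategy only returns $d+2$ set-theoretic generators. To be fair, the paper's own proof makes the same tacit assumption --- after disposing of $h\le d$ it simply says ``we may assume $h=d+1$'' without excluding $h=d+2$ --- so you have put your finger on a real soft spot shared with the paper, but the justification you supply does not close it; either the case $\hh(I)=d+2$ needs a separate argument, or the hypotheses must be read as forcing $\hh(I)\le d+1$.
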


 \begin{proof}
Using Remark \ref{non:Noe}, without loss of generality we may assume that $A$ is finitely generated $R$-algebra. Let $\hh(I) = h$. It follows from the Theorem \ref{fs} that there exists an ideal $J \subset A[X]$ such that the followings hold:
\begin{enumerate}
	\item $\sqrt{I}=\sqrt{J}$;
	\item $J/J^2$
	is a free $A[X]/J$-module of rank $h$.
\end{enumerate}
 
 If $h \le d$, then from Lemma \ref{MKL} $J$ is generated by at-most $d + 1$ elements. Therefore, we may assume that $h = d + 1$. Let
 $J = <f_1,..., f_{d+1}> + J^2$. Applying Theorem \ref{pe}, we see that $J$ is generated by $d + 1$
 elements. Therefore, from $(1)$, it follows that $I$ is set theoretically generated by $d+1$ elements. This completes the proof.  
 \end{proof}

\end{document}